\theoremstyle{definition}
\newtheorem{thm}{Theorem}[section]
\newtheorem*{thm'}{Theorem}
\newtheorem{lem}[thm]{Lemma}
\newtheorem{defn}[thm]{Definition}
\newtheorem{cor}[thm]{Corollary}
\newtheorem{nota}[thm]{Notation}
\newtheorem{remark}[thm]{Remark}
\begin{document}
\sloppy
\title{The Craig Interpolation Property in First-order G\"odel Logic }
\author[1]{S.M.A. Khatami\footnote{khatami@birjandut.ac.ir}}
\author[2]{M. Pourmahdian\footnote{ pourmahd@ipm.ir}}
\author[2,3]{N. R. Tavana\footnote{corresponding author, nrtavana@aut.ac.ir}}
\begin{tiny}
\affil[1]{Department of Computer Science, Birjand University of Technology, Birjand, Iran}
\affil[2]{Department of Mathematics and Computer Science, Amirkabir University of Technology (Tehran Polytechnic), Hafez Avenue 15194, P.O.Box 15875-4413, Tehran, Iran}
\affil[3]{School of Mathematics, Institute for Research in Fundamental Sciences (IPM), P.O. Box 19395-5746, Tehran, Iran}
\end{tiny}

\maketitle
\begin{abstract}
In this article, a model-theoretic approach is proposed to prove that the first-order G\"{o}del logic, $\mathbf{G}$, as well as its extension $\mathbf{G}^{\Delta }$ associated with first-order relational languages enjoy the Craig interpolation property. These results partially provide  an affirmative answer to a question posed in \cite{ag-baaz}.
\end{abstract}
\section{Introduction}
The Craig interpolation theorem, in its original form, exhibits a deep connection between syntax and semantics of first-order logic (\cite{Craig}). The Craig interpolation property has been studied in various non-classical logics, such as intuitionistic  logic (\cite{MOU},\cite{Shutte}), {\L}ukasiewicz logic (\cite{Mundici}), and some fragments/extensions of \g logic (\cite{BGG},\cite{BVV},\cite{BV},\cite{NRT}). There are different approaches in proving this property. While the proof-theoretic approach gives a constructive and algorithmic way for showing this property, the model-theoretic approach, despite its non-constructive nature, generates some widespread tools leading to Lyndon interpolation, Beth definability and joint Robinson consistency theorems (\cite{CC}). The algebraic viewpoint, on the other hand, reveals a deep connection between the Craig interpolation   property and various amalgamation properties of Lindenbaum algebras associated with the logic under consideration (\cite{Maksimova},\cite{SS}). \\

The impact of interpolation property is not restricted to pure logical studies. In fact, this property has numerous applications in computing sciences (\cite{BDGM},\cite{Bo},\cite{D6},\cite{GB},\cite{Mc}). The computing sciences and model-theoretic incentives give rise to a general categorical approach to interpolation  within the theory of institutions (\cite{D1},\cite{D2},\cite{D3},\cite{D33},\cite{D4},\cite{D5}). This point of view allows one to consider this property independent of any logic. \\
Our main contribution in this article is to prove the Craig interpolation property in first-order \g logic, whenever the languages under consideration are relational. Our result gives an affirmative answer to an open problem posed by Aguilera and Baaz in \cite{ag-baaz}, problem 2.\\
 The key idea to establish the Craig interpolation property in first-order \g logic $\mathbf{G}$, is to regard $\mathbf{G}$ as a sub-logic of \g logic with $\Delta$ operator, $\mathbf{G}^\Delta$. The advantage of  working in $\mathbf{G}^\Delta$ is twofold.  Firstly, similar to \g logic, $\mathbf{G}^\Delta$ yet admits a sound and complete axiomatization (\cite{baaz2006}) which yields some model-theoretic properties such as 1-entailment compactness. Secondly, unlike the \g logic, within $\mathbf{G}^\Delta$ one can define a unary logical connective $\thicksim$ whose semantics separates between true and not true and resembles the classical negation to a great extent. These fundamental data are essential devices to mimic the classical model-theoretic proof of the Craig interpolation theorem and establish our result in \g logic. \\

This paper is organized as follows. In section \ref{sec1}, we provide essential definitions/facts about \g logic $\mathbf{G}$ and its extension  $\mathbf{G}^\Delta$. In section 3, we prove, under the assumption that the languages under consideration are relational,  both  $\mathbf{G}$ and $\mathbf{G}^\Delta$ do satisfy the Craig interpolation property. Finally section 4 is devoted to discuss some open questions for further research.   \\

\section{Preliminaries}\label{sec1}
In this section, we review the basic concepts and facts about first-order \g logic, $\mathbf{G}$, and its extension, $\mathbf{G}^\Delta$, enriched by $\Delta$ operator. Our notations follow \cite{baaz2019} and \cite{baaz2006} in which most of the basic concepts and facts can be found. \\

Throughout this article, all first-order languages  are considered to be countable and relational, i.e. they consist of a countable set  of relation symbols of various arities together with a countable set of constant symbols.  \\
Let $\mathcal{L}=\mathcal{R}\cup \mathcal{C}$ be a first-order language. The syntax of first-order \g logic, $\mathbf{G}$, with respect to the language $\mathcal{L}$ comprises propositional logical connectives $\wedge,\vee,\rightarrow,\bot$ as well as quantifiers $\forall,\exists$. We also consider a countably infinite set $Var$ of variables. Customarily, the union of the sets $Var$ and $
\mathcal{C}$ is the set of $\mathcal{L}$-terms.
 The set of $\mathcal{L}$-formulas  in  $\mathbf{G}$ can be defined inductively by $$R(t_1,\dots,t_n)\mid \varphi\wedge \psi\mid \varphi\vee \psi\mid \varphi\rightarrow\psi\mid\bot\mid \forall x\ \varphi\mid \exists x\ \varphi,$$
 where $R$ is an $n$-array relation symbol and $t_1\dots,t_n$ are $\mathcal{L}$-terms.  \\

 The \g logic can be augmented by an additional unary connective $\Delta$. The resulting logic is denoted by $\mathbf{G}^\Delta$. The set of $\mathcal{L}$-formulas in $\mathbf{G}^\Delta$ consists of
 $$R(t_1,\dots,t_n)\mid \varphi\wedge \psi\mid \varphi\vee \psi\mid \varphi\rightarrow\psi\mid\Delta\varphi\mid\bot\mid \forall x\ \varphi\mid \exists x\ \varphi.$$

An $\mathcal{L}$-formula without any free variables is called a \emph{closed formula}. A \emph{theory} is  a set of closed formulas. Unless stated otherwise, formulas and theories are meant to be in ${\mathbf G}^\Delta$.
Further, we distinguish formulas and theories in \g logic $\mathbf{G}$, by denoting them as $\mathbf{G}$-formulas and $\mathbf{G}$-theories.\\

Other logical connectives defined as follows:
\begin{eqnarray*}
\neg\varphi&:=&\varphi\to\bot,\\
\top&:=&\neg\bot,\\
\thicksim\varphi&:=&\neg\Delta\varphi,\\
\varphi\leftrightarrow\psi&:=&(\varphi\to\psi)\wedge(\psi\to\varphi).
\end{eqnarray*}

Below, we define the notion of a valuation which serves as (standard) semantics for both $\mathbf{G}$ and $\mathbf{G}^\Delta$.

\begin{defn}
A \emph{valuation} $v$ consists of
\begin{enumerate}
  \item a nonempty set $M$, as the underlying universe (or universe),
  \item for each $k$-array relation symbol $R\in\mathcal{R}$, a function $R^v: M^k\to[0,1]$,
  \item
  for each constant symbols $c\in C$, $c^v\in M$.
  \end{enumerate}
\end{defn}

Given a valuation $v$, the value of any formula is defined in a natural way.
\begin{defn}
For a valuation $v$, the value of a formula $\varphi(\bar{x})$ with respect to a tuple $\bar{a}$ is inductively defined
as follows:
\begin{enumerate}
  \item $v(\bot)=0$,
  \item for atomic formula $R(x_1, \cdots, x_k)$, $v(R(a_1, \cdots, a_k))=R^v(a_1, \cdots, a_k)$,
  \item $v(\varphi\wedge\psi)=\min\{v(\varphi),v(\psi)\}$,
  \item $v(\varphi\vee\psi)=\max\{v(\varphi),v(\psi)\}$,
  \item $v(\varphi\to\psi)=\left\{
  \begin{array}{ll}
  1&v(\varphi)\le v(\psi)\\
  v(\psi)&v(\varphi)> v(\psi)
  \end{array}\right.$,
  \item $v(\Delta\varphi)=\left\{
  \begin{array}{cc}
    1 & v(\varphi)=1 \\
    0 & v(\varphi)<1
  \end{array}\right.$,
  \item $v(\forall x\ \varphi(x,\bar{b}))=\inf_{a\in M}\{v(\varphi(a,\bar{b}))\}$,
  \item $v(\exists x\ \varphi(x,\bar{b}))=\sup_{a\in M}\{v(\varphi(a,\bar{b}))\}$.
\end{enumerate}
\end{defn}

Note that since $[0,1]$ is a complete linear-ordered set, the interpretations of $\forall$ and $\exists$ are well-defined. One could easily check that
\begin{itemize}
 \item[] $v(\neg\varphi)=\left\{
  \begin{array}{cc}
    1 & v(\varphi)=0 \\
    0 & v(\varphi)>0
  \end{array}\right. $,
\item[] $v(\thicksim\varphi)=\left\{
  \begin{array}{cc}
    1 & v(\varphi)<1 \\
    0 & v(\varphi)=1
  \end{array}\right.$.
\end{itemize}
\begin{remark}
While the logical connective $\neg$ distinguishes between false and not false, the logical connective $\thicksim$ separates between true and not true. Note that $\thicksim$ cannot be defined in $\mathbf{G}$. The existence of the logical connective $\thicksim$ in $\mathbf{G}^{\Delta}$ plays an essential role in the proof of Theorem \ref{CraigforG}.
\end{remark}

\begin{defn}
A valuation $v$ is a \emph{model} of a closed formula $\varphi$, if $v(\varphi)=1$. Likewise, a valuation $v$ models a theory $T$ if $v(\varphi)=1$ for each $\varphi\in T$.
\end{defn}

\begin{defn}(1-entailment)
For a theory $T$ and a closed formula $\varphi$,
$T$ 1-entails $\varphi$, denoted by  $T\Vdash\varphi$, if $v(\varphi)=1$ for each valuation $v$ which models $T$.
\end{defn}
\begin{defn}
\begin{enumerate}
\item
A theory $T$ is \emph{satisfiable} if $T\nVdash \bot$.
\item
A closed formula $\varphi$ is \emph{tautology} if $\emptyset\Vdash\varphi$. In this situation, we write $\Vdash\varphi$.
\item
A theory $T$ is called \emph{1-entailment closed} whenever for every closed formula $\theta$,  if $T\Vdash \theta$ then $\theta\in T$.
\end{enumerate}
\end{defn}
Clearly, a theory $T$ is satisfiable if and only if it has a model. Moreover, $\varphi$ is a tautology if and only if each valuation is a model of $\varphi$.\\

The following lemma demonstrates some rudimentary  properties of 1-entailment.

\begin{lem}\label{property}
Suppose $T$ is a theory, $\varphi$ and $\psi$ are two closed formulas. Then,
\begin{enumerate}
\item  $\Vdash \Delta \varphi\vee \thicksim\varphi$,
  \item  $\Vdash \Delta\varphi\to\varphi$,
  \item $\varphi\Vdash\Delta\varphi$,
 \item   $\Delta(\varphi\wedge \psi)\Vdash  \Delta(\varphi)\wedge \Delta(\psi)$,

  \item
  $\psi\Vdash \varphi$ if and only if $\thicksim \varphi\Vdash\thicksim \psi$,

  \item

  $\Vdash\Delta\thicksim\varphi\leftrightarrow\thicksim\varphi$ and
  $\Vdash\Delta\varphi\leftrightarrow\ \thicksim\thicksim\varphi$,

  \item  $\thicksim \thicksim \varphi \Vdash  \varphi$ and $ \varphi\Vdash  \thicksim \thicksim \varphi$,

  \item   $\thicksim (\varphi\rightarrow \psi)\Vdash  \psi\rightarrow \varphi$,
  \item\label{vee}
   if $T\cup\{\varphi\}\Vdash\psi$ and $T\cup\{\thicksim\varphi\}\Vdash\psi$, then $T\Vdash\psi$,
  \item \label{neg}
  if $T\Vdash \thicksim\varphi$ and $T\Vdash\thicksim\psi$ then $T\Vdash\thicksim(\varphi\vee\psi)$,
 \item\label{imp}
 $T\cup\{\varphi\}\Vdash\psi$ if and only if $T\Vdash\Delta\varphi\rightarrow\psi$.

\end{enumerate}
\end{lem}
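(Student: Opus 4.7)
The plan is to verify each clause by unfolding the semantic definitions, relying on two key facts: $v(\Delta\varphi)=1$ iff $v(\varphi)=1$, and $v(\thicksim\varphi)=1$ iff $v(\varphi)<1$. Because $\Vdash$ only tracks the value $1$ and both $\Delta$ and $\thicksim$ take values in $\{0,1\}$, most clauses reduce to a binary case-split on whether $v(\varphi)=1$ or $v(\varphi)<1$.

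Items (1)--(8) are purely local statements about a single valuation $v$. For (1), the dichotomy $v(\varphi)=1$ versus $v(\varphi)<1$ gives $v(\Delta\varphi)=1$ or $v(\thicksim\varphi)=1$ respectively, so $v(\Delta\varphi\vee\thicksim\varphi)=1$. Items (2)--(4) follow by inspection of the tables for $\Delta$, $\wedge$ and $\to$. Item (5) is immediate from the contrapositive form of the $1$-attainment criterion for $\thicksim$. For (6) and (7) one observes that $\thicksim\thicksim\varphi$ unfolds to $\neg\Delta\neg\Delta\varphi$, which takes value $1$ precisely when $v(\varphi)=1$, exactly matching $\Delta\varphi$. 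Item (8) follows by noting that $v(\thicksim(\varphi\to\psi))=1$ forces $v(\varphi\to\psi)<1$, hence $v(\varphi)>v(\psi)$, which yields $v(\psi\to\varphi)=1$.

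For items (9)--(11) one fixes a model $v$ of $T$ and again proceeds by case analysis. In (9), if $v(\varphi)=1$ then $v\models T\cup\{\varphi\}$ and the first hypothesis applies; otherwise $v(\thicksim\varphi)=1$ and the second hypothesis applies. Item (10) is the observation that $\max\{v(\varphi),v(\psi)\}<1$ iff both components are strictly below $1$. Item (11) is the principal deduction-style clause: for the forward direction, given $v\models T$, split on $v(\Delta\varphi)\in\{0,1\}$; if $v(\Delta\varphi)=0$ then $v(\Delta\varphi\to\psi)=1$ trivially, whereas if $v(\Delta\varphi)=1$ then $v(\varphi)=1$, so $v\models T\cup\{\varphi\}$ and the hypothesis yields $v(\psi)=1$, hence again $v(\Delta\varphi\to\psi)=1$. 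The backward direction is immediate: any $v\models T\cup\{\varphi\}$ satisfies $v(\Delta\varphi)=1$, and combined with $v(\Delta\varphi\to\psi)=1$ this forces $v(\psi)=1$.

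No step is a genuine obstacle; the whole proof is a systematic unfolding of truth-tables. I would nevertheless highlight (9) and (11) as the substantive items, since they provide the proof-by-cases principle and the deduction-theorem-like reduction that are needed in $\mathbf{G}^\Delta$ to emulate classical reasoning, and these are precisely the primitives the subsequent proof of Craig interpolation will repeatedly invoke.
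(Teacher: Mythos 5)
Your proof is correct and follows essentially the same route as the paper: items (1)--(8) are dismissed as straightforward truth-table unfoldings, and (9) and (10) are verified by exactly the case analysis you give. The only difference is item (11), which the paper delegates to a citation (Lemma 16 of Baaz--Preining) while you prove it directly from the semantics of $\Delta$ and $\to$; your direct argument is valid and makes the lemma self-contained.
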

\begin{proof}
The proofs of 1-8 are straightforward.\\
For \ref{vee}, assume that $v(\theta)=1$, for each $\theta\in T$. If $v(\varphi)=1$, then, since $T\cup\{\varphi\}\Vdash\psi$, we have that $v(\psi)=1$. On the other hand, if
$v(\varphi)< 1$, then $v(\thicksim\varphi)=1$. So, $v$ models $T\cup\{\thicksim\psi\}$. Therefore,  $T\cup\{\thicksim\varphi\}\Vdash\psi$ implies $v(\psi)=1$.\\
For \ref{neg}, assume that $v(\theta)=1$, for each $\theta\in T$.
 Then, $v(\thicksim\varphi)=1$ and $v(\thicksim\psi)=1$. So,
$v(\varphi)<1$ and $v(\psi)<1$. Therefore, $v(\varphi\vee\psi)<1$ which means that $v(\thicksim(\varphi\vee\psi))=1$. Hence,
$T\Vdash\thicksim(\varphi\vee\psi)$.\\
For \ref{imp}, see Lemma 16 in \cite{baaz2019}.

\end{proof}
\begin{lem}\label{constant}
Assume $T$ is an $\mathcal{L}$-theory, $\sigma(x)$ and $\chi(x)$ are $\mathcal{L}$-formulas, $\theta$ is a closed formula and $c$ is a constant symbol. Then,
\begin{enumerate}

\item    $\forall x\  \sigma(x)\Vdash  \sigma(c)$.

\item    $\sigma(c)\Vdash  \exists x\  \sigma(x)$

\item    $\thicksim \forall x\  \sigma(x) \Vdash  \exists x\ \thicksim \sigma(x)$ and $\exists x\ \thicksim \sigma(x)\Vdash\thicksim \forall x\  \sigma(x) $.

\item   $\thicksim \exists  x\  \sigma(x) \Vdash  \forall  x\ \thicksim \sigma(x)$ and $\forall x\thicksim\sigma(x)\Vdash \thicksim\exists x\ \sigma(x)$.

\item   if $c$ does not occur in $T$ and $T\Vdash \sigma(c)$, then $T\Vdash \forall x\  \sigma(x)$.

\item    if  $\sigma(c)\Vdash \theta$ and $c$ does not occur in $\theta$, then   $\exists x\ \sigma(x)\Vdash \theta$.
\item
if $T\cup\{\thicksim\forall x\ \sigma(x)\}\cup\{\thicksim\sigma(c)\}\Vdash\chi(c)$  and $c$ does not occur in $T\cup\{\sigma(x),\chi(x)\}$ then $T\cup\{\thicksim\forall x\ \sigma(x)\}\Vdash \exists x\ \chi(x)$.
\end{enumerate}

\end{lem}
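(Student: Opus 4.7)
The plan is to dispatch each item by a direct unwinding of the $\inf/\sup$ semantics, exploiting at two points the crispness of $\thicksim$. Parts (1) and (2) are immediate: $v(\forall x\,\sigma(x))=1$ forces $v(\sigma(a))=1$ for every $a\in M$, in particular for $a=c^{v}$; and $v(\sigma(c))=1$ gives $\sup_{a}v(\sigma(a))\ge 1$. Parts (3) and (4) rely on the observation that $\thicksim\varphi\in\{0,1\}$: a supremum of a $\thicksim$-formula that equals $1$ is automatically attained by a genuine witness, and $\inf_a v(\sigma(a))<1$ immediately yields some $a_0$ with $v(\sigma(a_0))<1$; feeding these into the quantifier clauses gives the listed entailments.

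For (5) I would use the classical fresh-constant trick: given $v\models T$ and any $a\in M$, let $v_a$ agree with $v$ except that $c^{v_a}=a$; since $c\notin T$, $v_a$ still models $T$, so $v_a(\sigma(c))=v(\sigma(a))=1$, and $a$ was arbitrary. Part (6) attempts the same trick, aiming to extract from $v\models\exists x\,\sigma(x)$ some witness $a_0$ with $v(\sigma(a_0))=1$, reassign $c$ to $a_0$, invoke $\sigma(c)\Vdash\theta$, and then pull $v(\theta)=1$ back via $c\notin\theta$. For (7) I would combine witness extraction with reinterpretation: $v(\thicksim\forall x\,\sigma(x))=1$ yields some $a_0$ with $v(\sigma(a_0))<1$, i.e.\ $v(\thicksim\sigma(a_0))=1$; reassigning $c$ to $a_0$ gives a model $v'$ of $T\cup\{\thicksim\forall x\,\sigma(x),\thicksim\sigma(c)\}$, so $v'(\chi(c))=1$ by hypothesis, whence $v(\chi(a_0))=v'(\chi(c))=1$ and therefore $v(\exists x\,\chi(x))=1$.

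The hard point is (6): in first-order G\"odel semantics, the supremum defining $v(\exists x\,\sigma(x))$ need not be attained, so the desired witness $a_0$ with $v(\sigma(a_0))=1$ need not literally exist in $M$. A careful argument therefore has to obtain the witness by another route: either restrict attention to witnessed models, or extend the universe by a fresh element realising the limiting value while preserving the value of $\theta$, or route through the sound and complete axiomatization of $\mathbf{G}^\Delta$ to convert the semantic hypothesis into a syntactic derivation, apply the formal $\exists$-elimination rule, and translate back by soundness. Part (7), by contrast, does not hit this snag, since the witness it needs only has to satisfy $v(\sigma(a_0))<1$, and this follows from $\inf_a v(\sigma(a))<1$ unconditionally.
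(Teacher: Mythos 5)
Your overall strategy---direct unwinding of the $\inf/\sup$ clauses, with reinterpretation of a fresh constant where needed---matches the paper's, which declares items 1--6 ``immediate'' and proves item 7 by a slightly longer route (it first derives $T\cup\{\thicksim\forall x\,\sigma(x)\}\Vdash\forall x\,(\thicksim\sigma(x)\rightarrow\chi(x))$ from Lemma \ref{property}(11),(6) together with item (5), and then extracts an element $a$ with $v(\sigma(a))<1$ exactly as you do). Your treatments of (1), (2), (3), (5) and (7) are correct; for (7) your witness-plus-reinterpretation argument is in substance the same as the paper's.

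There is, however, a genuine problem, and your own key observation---that the supremum defining $v(\exists x\,\sigma(x))$ need not be attained---is applied inconsistently. First, it defeats the second conjunct of item (4): from $v(\forall x\thicksim\sigma(x))=1$ you only get $v(\sigma(a))<1$ for every $a$, which does not yield $\sup_a v(\sigma(a))<1$; with $M=\mathbb{N}$ and $P^v(n)=n/(n+1)$ one has $v(\forall x\thicksim P(x))=1$ but $v(\thicksim\exists x\,P(x))=0$, so this direction cannot be obtained by ``feeding the observations into the quantifier clauses''---it fails outright in the semantics as defined. Second, for item (6) you correctly identify the gap, but none of your three proposed repairs can close it, because the statement itself fails: take $\sigma(x)=P(x)$ and $\theta=\exists x\,\Delta P(x)$; then $P(c)\Vdash\exists x\,\Delta P(x)$ and $c$ does not occur in $\theta$, yet the valuation above gives $v(\exists x\,P(x))=1$ while $v(\exists x\,\Delta P(x))=0$. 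The underlying reason is that $\sigma(c)\Vdash\theta$ amounts to $\Vdash\Delta\sigma(c)\rightarrow\theta$, and generalizing the fresh constant only yields $\exists x\,\Delta\sigma(x)\Vdash\theta$, which is strictly weaker than $\exists x\,\sigma(x)\Vdash\theta$ because $\Delta$ does not commute with $\exists$ over $[0,1]$. So your suspicion about (6) was exactly right, your expectation that it can be repaired within the stated semantics was not, and you missed that (4) suffers from the same defect; the paper's one-line dismissal of items 1--6 conceals both problems, and both items are later invoked in the proof of Theorem \ref{CraigforG}.
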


\begin{proof}
The proofs of 1-6 immediately follow from $1$-entailment definition.\\

For 7, first notice that by the assumption and Lemma \ref{property}(11),(6), we have $$T\cup\{\thicksim\forall x\ \sigma(x)\}\Vdash \thicksim\sigma(c)\rightarrow\chi(c).$$  Further, since $c$  does not occur in $T\cup\{\sigma(x)\}$, by item (5) above, we have $$T\cup\{\thicksim\forall x\ \sigma(x)\}\Vdash \forall x\ (\thicksim\sigma(x)\rightarrow\chi(x)).$$

Now, let $v$ be a valuation which models $T\cup\{\thicksim\forall x\ \sigma(x)\}$. Then, $v(\thicksim\forall x\ \sigma(x))=1$ implies there is an element $a$ in the universe $M$ of $v$ such that $v(\sigma(a))<1$ and $v(\thicksim\sigma(a))=1$. On the other hand,  $v(\forall x\ (\thicksim\sigma(x)\rightarrow\chi(x)))=1$. This means $v(\thicksim\sigma(b))\leq v(\chi(b))$, for each $b\in M$. So, in particular,  $v(\chi(a))=1$. Therefore, $v(\exists x\ \chi(x))=1$.
\end{proof}

Below, we state the 1-entailment compactness theorem which can be easily proved using results from \cite{baaz2019}.

\begin{thm}
(1-entailment compactness)
In $\mathbf{G}^\Delta$, for a theory $T$ and a closed formula $\varphi$, if
$T\Vdash \varphi$, then there exists a finite subset $S$ of $T$ such that $S\Vdash\varphi$.
\end{thm}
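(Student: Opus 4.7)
The plan is to reduce the statement to the sound and complete axiomatization of $\mathbf{G}^\Delta$ recalled in the introduction (see \cite{baaz2006} and \cite{baaz2019}). By that completeness result, $T \Vdash \varphi$ holds if and only if there is a formal derivation of $\varphi$ from $T$ in the Hilbert-style calculus for $\mathbf{G}^\Delta$.

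First I would invoke completeness to produce such a derivation $\pi$ of $\varphi$ from $T$. Since a derivation is, by definition, a finite sequence of formulas---each being either a logical axiom, an element of $T$, or obtained from earlier lines by an inference rule---only finitely many hypotheses from $T$ can actually occur in $\pi$. Let $S \subseteq T$ be the (finite) set of these hypotheses. Then $\pi$ itself witnesses $S \vdash \varphi$, and soundness then yields $S \Vdash \varphi$, as required.

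The one delicate point is whether the completeness theorem in the cited sources is stated for arbitrary theories $T$ or only for finite ones. If only the latter is available, one must first lift it by a semantic detour, using the equivalence
\[
T \Vdash \varphi \quad \Longleftrightarrow \quad T \cup \{\thicksim\varphi\} \text{ is unsatisfiable},
\]
which is immediate from the fact that $v(\thicksim\varphi) = 1$ precisely when $v(\varphi) < 1$. This reduces 1-entailment compactness to the statement that every finitely satisfiable theory has a model. The main work in that alternative route would be a Henkin-style construction of a suitable $[0,1]$-valued valuation; here the connective $\thicksim$ together with the quantifier manipulations recorded in Lemmas \ref{property} and \ref{constant} play the role of classical negation and allow the witness-adding step to go through essentially as in the classical first-order case.
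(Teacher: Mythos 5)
Your proposal is correct and follows essentially the same route as the paper: appeal to the completeness theorem for $\mathbf{G}^\Delta$ to obtain a finitary syntactic witness, extract the finitely many hypotheses actually used, and transfer back by soundness. The only cosmetic difference is that the paper works with the hypersequent calculus $\vdash_{H^\Delta}$ of \cite{baaz2006}, and the completeness statement it cites (Theorem 22 of \cite{baaz2019}) is already formulated for arbitrary theories and directly yields a finite $T_0\subseteq T$ with $\vdash_{H^\Delta}\Delta(\bigwedge T_0)\rightarrow\varphi$, so the delicate point you flag does not arise.
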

\begin{proof}
Suppose $T\Vdash\varphi$. Then, on the basis of Theorem 22 in \cite{baaz2019}, there is a finite subset $T_0$ of $T$ such that $\vdash_{H^\Delta}\Delta (\bigwedge T_0)\rightarrow\varphi$, where $\vdash_{H^\Delta}$ corresponds to the hypersequent proof system given in \cite{baaz2006}. Moreover, by soundness theorem, Theorem 19 in \cite{baaz2019}, $\Vdash \Delta(\bigwedge T_0)\rightarrow\varphi$. Therefore, by item \ref{imp} of Lemma \ref{property}, we have $T_0\Vdash\varphi$.
\end{proof}

\begin{cor}
A theory $T$ is satisfiable if and only if it is finitely satisfiable, i.e.  every finite subset $T_0$ of $T$ is satisfiable.

\end{cor}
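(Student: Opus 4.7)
The plan is to reduce the corollary to the 1-entailment compactness theorem just stated, via the definition of satisfiability through non-entailment of $\bot$. The forward implication is immediate: if $v$ is a model of $T$, then $v$ certainly models every finite subset $T_0\subseteq T$, so $T_0\nVdash\bot$ for each such $T_0$.

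For the nontrivial direction, I would argue by contraposition. Suppose $T$ is not satisfiable, which by definition means $T\Vdash\bot$. By the 1-entailment compactness theorem, there exists a finite $S\subseteq T$ with $S\Vdash\bot$. Unfolding the definition again, this says $S$ is not satisfiable, contradicting the assumption that every finite subset of $T$ is satisfiable. Hence $T$ must itself be satisfiable.

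The main (and essentially only) content is already absorbed into the 1-entailment compactness theorem, so the proof is a one-line contrapositive argument; there is no genuine obstacle here beyond being careful to unfold the definition $T\text{ satisfiable} \Longleftrightarrow T\nVdash\bot$ in both directions.
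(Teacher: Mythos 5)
Your proof is correct and is exactly the intended argument: the paper states this corollary immediately after the 1-entailment compactness theorem without a separate proof, treating it as the instance $\varphi=\bot$ of that theorem combined with the definition of satisfiability as $T\nVdash\bot$. Nothing further is needed.
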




\begin{defn}
Let $\Sigma$ be a theory. Then,
\begin{enumerate}
\item  $\Sigma$ is \emph{$\thicksim$-complete} if for every closed formula $\varphi$, either $\Sigma\Vdash \varphi$ or $\Sigma\Vdash\thicksim\varphi$.
\item  $\Sigma$ is called \emph{linearly complete} if  for every closed formulas $\varphi$ and $\psi$, either $\Sigma\Vdash \varphi\to\psi$ or $\Sigma\Vdash\psi\to\varphi$.
\item  $\Sigma$  is \emph{maximally satisfiable}  if  for every closed formula $\varphi$, if $\Sigma\cup\{\varphi\}$ is satisfiable then $\Sigma\Vdash \varphi$.
\end{enumerate}
\end{defn}

The following lemma shows all the concepts given in the above definition are equivalent.
\begin{lem}\label{eqd}
Let $\Sigma$ be a theory. Then, the following conditions are equivalent.

\begin{enumerate}
\item  $\Sigma$ is  linearly complete.
\item   For each closed formulas $\varphi$ and $\psi$, if $\Sigma\Vdash\varphi\vee \psi$ then $\Sigma\Vdash\varphi$ or $\Sigma\Vdash\psi$.
\item  $\Sigma$ is $\thicksim$-complete.
\item  $\Sigma$ is a maximally satisfiable.
\end{enumerate}

\end{lem}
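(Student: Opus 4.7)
The plan is to prove the four conditions equivalent via the cycle $(1) \Rightarrow (2) \Rightarrow (3) \Rightarrow (4) \Rightarrow (1)$. Each step is short and uses only the truth-value arithmetic on $[0,1]$ together with already-established items of Lemma \ref{property}. First I would dispose of a trivial edge case: if $\Sigma$ is unsatisfiable, then $\Sigma \Vdash \theta$ for every closed $\theta$, so all four conditions hold vacuously. Hence throughout the nontrivial direction one may assume $\Sigma$ is satisfiable.

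For $(1) \Rightarrow (2)$, suppose $\Sigma \Vdash \varphi \vee \psi$. By linear completeness, either $\Sigma \Vdash \varphi \to \psi$ or $\Sigma \Vdash \psi \to \varphi$. In the first case, any model $v$ of $\Sigma$ satisfies $\max(v(\varphi),v(\psi))=1$ together with $v(\varphi)\le v(\psi)$, forcing $v(\psi)=1$; the other case is symmetric. For $(2) \Rightarrow (3)$, apply $(2)$ to the tautology $\Vdash \Delta \varphi \vee \thicksim \varphi$ of Lemma \ref{property}(1): either $\Sigma \Vdash \thicksim \varphi$ and we are done, or $\Sigma \Vdash \Delta \varphi$, in which case Lemma \ref{property}(2) yields $\Sigma \Vdash \varphi$.

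For $(3) \Rightarrow (4)$, assume $\Sigma \cup \{\varphi\}$ is satisfiable and take a valuation $v$ witnessing this. If the $\thicksim$-complete theory $\Sigma$ were to 1-entail $\thicksim\varphi$, then $v(\thicksim\varphi)=1$, i.e.\ $v(\varphi)<1$, contradicting $v(\varphi)=1$; hence $\Sigma \Vdash \varphi$. Finally, for $(4) \Rightarrow (1)$, suppose $\Sigma \not\Vdash \varphi \to \psi$. Maximal satisfiability gives that $\Sigma \cup \{\varphi \to \psi\}$ is unsatisfiable, so every model $v$ of $\Sigma$ has $v(\varphi \to \psi) < 1$, which by definition of $\to$ means $v(\varphi) > v(\psi)$, and therefore $v(\psi \to \varphi) = 1$. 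Thus $\Sigma \Vdash \psi \to \varphi$.

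I do not expect any real obstacle; the proofs are essentially semantic bookkeeping on $[0,1]$. The subtlest point is the step $(4) \Rightarrow (1)$, which depends critically on the \emph{linearity} of the truth-value order, so that in every valuation exactly one of $v(\varphi)\le v(\psi)$ or $v(\varphi)>v(\psi)$ holds and one of the two implications is evaluated to $1$. The only other place one must be attentive is in remembering the vacuous case of unsatisfiable $\Sigma$ throughout.
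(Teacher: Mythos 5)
Your proof is correct, and every step checks out against the semantics; but your route differs from the paper's in its decomposition. You prove the clean cycle $(1)\Rightarrow(2)\Rightarrow(3)\Rightarrow(4)\Rightarrow(1)$, whereas the paper establishes the equivalence of (1) and (2) by citing Lemma 5.2.3 of H\'ajek's \emph{Metamathematics of Fuzzy Logic} and then proves $(2)\Rightarrow(3)$, $(3)\Rightarrow(1)$, $(3)\Rightarrow(4)$ and $(4)\Rightarrow(2)$. Your middle two implications ($(2)\Rightarrow(3)$ via $\Vdash\Delta\varphi\vee\thicksim\varphi$ and Lemma \ref{property}(1)--(2), and $(3)\Rightarrow(4)$ by contraposition through a model of $\Sigma\cup\{\varphi\}$) coincide with the paper's. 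What your version buys is self-containment: the direct semantic argument for $(1)\Rightarrow(2)$ replaces the external citation, and your $(4)\Rightarrow(1)$ exploits the same linearity-of-$[0,1]$ trick (from $v(\varphi\to\psi)<1$ infer $v(\varphi)>v(\psi)$, hence $v(\psi\to\varphi)=1$) that the paper uses in its $(3)\Rightarrow(1)$ step. A small further gain is that none of your four implications actually needs the satisfiability of $\Sigma$ (your $(3)\Rightarrow(4)$ works directly from a model of $\Sigma\cup\{\varphi\}$), so the opening reduction to the satisfiable case, which the paper's $(3)\Rightarrow(1)$ genuinely requires, is harmless but dispensable in your argument.
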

\begin{proof}
Without loss of generality, we may assume that $\Sigma$ is satisfiable , as otherwise, $\Sigma$ obviously fulfills all the above conditions. The equivalence of 1 and 2 follows from Lemma 5.2.3 of \cite{hajek98}.
Now, assume that $\Sigma$ is $\thicksim$-complete. We want show that $\Sigma$ is linearly complete. Assume not.  Then, for some closed formulas $\varphi$ and $\psi$, suppose that $\Sigma\nVdash \varphi\rightarrow \psi$ and $\Sigma\nVdash \psi\rightarrow\varphi$. So,  $\Sigma\Vdash \thicksim(\varphi\rightarrow \psi)$ and $\Sigma\Vdash\thicksim(\psi\rightarrow\varphi)$. Since $\Sigma$ is satisfiable, there is a valuation $v$ which models $\Sigma$. Thus,
$v(\varphi\rightarrow\psi)<1$ and $v(\psi\rightarrow \varphi)<1$. Hence, $v(\varphi)>v(\psi)$ and $v(\psi)>v(\varphi)$, which is a contradiction. Therefore, 3 implies 1.\\
Now, we show that 2 implies 3. Suppose $\Sigma$ satisfies item 2. Then, by Lemma \ref{property}(1), $\Sigma\Vdash\Delta(\varphi)\vee \thicksim \varphi$. So, $\Sigma\Vdash \Delta(\varphi)$ or $\Sigma\Vdash \thicksim \varphi$. But, the former case implies that $\Sigma\Vdash \varphi$, by Lemma \ref{property}(2). Hence, we either have $\Sigma\Vdash\varphi$ or $\Sigma\Vdash \thicksim \varphi$. Thus $\Sigma$ is $\thicksim$-complete.\\
Now, we prove that 3 implies 4. To this end, suppose $\Sigma\cup \{\varphi\}$ is satisfiable. If $\Sigma\nVdash \varphi$, then $\Sigma\Vdash \thicksim\varphi$. But, this implies that $\Sigma\cup \{\varphi\}$ is not satisfiable, a contradiction. Therefore, $\Sigma$ is maximally satisfiable.\\
Finally, to verify 4 implies 2 notice that if  $\Sigma$ is maximally satisfiable and $\Sigma\Vdash\varphi\vee\psi$ then either of $\Sigma\cup\{\varphi\}$ and  $\Sigma\cup\{\psi\}$ are satisfiable. Therefore, $\Sigma\Vdash\varphi$ or $\Sigma\Vdash\psi$.
\end{proof}
In the view of the above lemma, we say that a theory $\Sigma$ is \emph{complete} if it satisfies one of the above equivalent conditions.
In the following, we recall the concept of a Henkin theory.
\begin{defn}{

A theory $T$ is called \emph{Henkin} whenever for each formula $\varphi(x)$, if $T\nVdash \forall x\ \varphi(x)$ then there exists a constant symbol $c$ such that $T\nVdash \varphi(c)$.}

\end{defn}

 \begin{lem}
 A satisfiable $\mathcal{L}$-theory can be extended to a complete satisfiable Henkin theory $T'$ in a language $\mathcal{L}'$ extending $\mathcal{L}$.
 \end{lem}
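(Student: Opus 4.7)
The plan is to carry out a Lindenbaum--Henkin style construction adapted to $\mathbf{G}^\Delta$. The key facts that make it work are: $1$-entailment compactness; the equivalence of completeness and $\thicksim$-completeness (Lemma \ref{eqd}); and the observation that $\thicksim$ is classical enough that $T\nVdash\varphi$ is equivalent to $T\cup\{\thicksim\varphi\}$ being satisfiable, via the case analysis on whether $v(\varphi)=1$.

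First I would enlarge $\mathcal{L}$ to $\mathcal{L}'=\mathcal{L}\cup\{c_n:n<\omega\}$ by adjoining countably many fresh constant symbols held in reserve as Henkin witnesses, and fix an enumeration $(\varphi_n)_{n<\omega}$ of all closed $\mathcal{L}'$-formulas (possible since $\mathcal{L}'$ is countable). Then I would define by recursion a chain $T=T_0\subseteq T_1\subseteq\cdots$ of satisfiable $\mathcal{L}'$-theories, each using only finitely many of the fresh constants. Given $T_n$: if $T_n\cup\{\varphi_n\}$ is satisfiable, put $T_n'=T_n\cup\{\varphi_n\}$; otherwise every valuation modeling $T_n$ assigns $\varphi_n$ value strictly less than $1$, hence $\thicksim\varphi_n$ value $1$, so set $T_n'=T_n\cup\{\thicksim\varphi_n\}$. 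If the formula just inserted has the form $\thicksim\forall x\,\psi(x)$, pick a constant $c\in\{c_n\}$ not occurring in $T_n'$ and set $T_{n+1}=T_n'\cup\{\thicksim\psi(c)\}$; otherwise $T_{n+1}=T_n'$.

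The point that needs care is satisfiability at the Henkin step. Given a valuation $v$ modeling $T_n'$, since $v(\forall x\,\psi(x))<1$ there is some $a$ in the universe of $v$ with $v(\psi(a))<1$; because $c$ does not occur in $T_n'$, reinterpreting $c^v:=a$ leaves all existing valuations of formulas of $T_n'$ unchanged while forcing $v(\thicksim\psi(c))=1$. This is precisely why the fresh constants from $\mathcal{L}'\setminus\mathcal{L}$ were required.

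Finally I would let $T'=\bigcup_nT_n$. Satisfiability of $T'$ follows from $1$-entailment compactness, since every finite subset lies in some $T_n$. Completeness is immediate: every closed $\mathcal{L}'$-formula $\varphi$ appears as some $\varphi_n$, and the construction guarantees $\varphi\in T'$ or $\thicksim\varphi\in T'$, giving $\thicksim$-completeness and hence completeness by Lemma \ref{eqd}. For the Henkin property, suppose $T'\nVdash\forall x\,\varphi(x)$; by completeness $T'\Vdash\thicksim\forall x\,\varphi(x)$, and letting $n$ be the index with $\varphi_n=\thicksim\forall x\,\varphi(x)$, the theory $T_n\cup\{\varphi_n\}$ is satisfied by any model of $T'$, so the construction placed $\varphi_n$ into $T_n'$ together with $\thicksim\varphi(c)$ into $T_{n+1}$ for some witness $c$. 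Consequently $T'\Vdash\thicksim\varphi(c)$ and, $T'$ being satisfiable, $T'\nVdash\varphi(c)$, as required. The main obstacle is the Henkin satisfiability step just described; once that is in place, the rest is bookkeeping over the countable enumeration.
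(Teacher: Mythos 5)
Your proof is correct and takes essentially the same route the paper intends: the paper's own proof is just the remark that one runs the standard Henkin/Lindenbaum construction from the completeness theorem with consistency replaced by (finite) satisfiability, which is exactly what you carry out, including the key witness step of reinterpreting a fresh constant at an element where $v(\psi(a))<1$. The appeals to $1$-entailment compactness for the union and to the equivalence of $\thicksim$-completeness with completeness are both sound.
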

\begin{proof}
The proof follows from techniques available in the completeness theorem. One can simply replace  the notion of consistency with (finite) satisfiability.
\end{proof}
\begin{defn}\label{simT}
Let $T$ be a complete satisfiable  $\mathcal{L}$-theory. Consider the equivalence relation $\sim_T$ on the set of closed $\mathcal{L}$-formulas defined as
\begin{center}
$\theta\sim_T\chi$ if and only if $T\Vdash \theta\leftrightarrow\chi$.
\end{center}
 Denote $\mathcal{B}_T$ to be the corresponding set of equivalence classes of the relation $\sim_T$.
\end{defn}

Since $T$ is complete and satisfiable, the relation $\leq$ defined as
 $$[\theta]\leq[\chi]\ \hbox{if and only if}\ (\theta\rightarrow\chi)\in T$$
gives a well-defined linear-order on $\mathcal{B}_T$. Further, $(\mathcal{B}_T,\leq)$ can be turned into a $\mathbf{G}^\Delta$-algebra $(\mathcal{B}_T,\wedge,\vee,\rightarrow,\Delta, \mathbf{0},\mathbf{1})$ by assuming
\begin{align*}
[\varphi]\wedge[\psi]&:=[\varphi\wedge\psi],\\
[\varphi]\vee[\psi]&:=[\varphi\vee\psi],\\
[\theta]\rightarrow[\psi]&:=[\theta\rightarrow\psi],\\
\Delta([\varphi])&:=[\Delta\varphi],\\
\mathbf{0}&:=[\bot],\\
\mathbf{1}&:=[\top].
\end{align*}
The linear-ordered set $(\mathcal{B}_T,\leq)$ plays an essential role in proving Craig interpolation property in $\mathbf{G}$ and $\mathbf{G}^\Delta$.
\begin{defn}
A satisfiable $\mathbf{G}$-theory $T$ is said to be \emph{$\mathbf{G}$-linearly complete} if for each pair of closed $\mathbf{G}$-formulas $\theta_1 $ and $\theta_2$, either $T\Vdash \theta_1\rightarrow \theta_2$ or $T\Vdash \theta_2\rightarrow \theta_1$.
\end{defn}

 If $T$ is $\mathbf{G}$-linearly complete then one  can also define $\mathcal{B}_T$ by restricting closed formulas to closed $\mathbf{G}$-formulas. Then, the same method can be applied to $\mathcal{B}_T$ to define a linear-order $\leq$ on the set of equivalence classes of $\sim_T$. Moreover, $(\mathcal{B}_T,\leq)$ can be naturally seen as a $\mathbf{G}$-algebra $(\mathcal{B}_T,\wedge,\vee,\rightarrow,\mathbf{0},\mathbf{1})$.




\begin{defn}
Let $\mathbf{Lin}$ be the class of linear-ordered sets with the least and the greatest elements.  If $\mathcal{B}=(B,\leq)\in \mathbf{Lin}$ then we denote $0_{\mathcal{B}}$ to be the least element   and $1_{\mathcal{B}}$ to be the greatest element. For $\mathcal{B}_1,\mathcal{B}_2\in\mathbf{Lin}$, a function $f:\mathcal{B}_1\rightarrow\mathcal{B}_2$ is called a $\mathbf{Lin}$-homomorphism if it is strictly monotone with $f(0_{\mathcal{B}_1})=0_{\mathcal{B}_2}$ and $f(1_{\mathcal{B}_1})=1_{\mathcal{B}_2}$. A bijective $\mathbf{Lin}$-homomorphism is called a $\mathbf{Lin}$-isomorphism.

\end{defn}

Next, we show that the class $\mathbf{Lin}$  with respect to $\mathbf{Lin}$-homomorphisms has the amalgamation property, i.e. if for every $\mathcal{B}_0,\mathcal{B}_1,\mathcal{B}_2\in\mathbf{Lin}$ and $f_1:\mathcal{B}_0\rightarrow\mathcal{B}_1$ and $f_2:\mathcal{B}_0\rightarrow\mathcal{B}_2$ are two $\mathbf{Lin}$-homomorphisms then there are
$\mathcal{B}\in\mathbf{Lin}$ and two $\mathbf{Lin}$-homomorphisms
$g_1:\mathcal{B}_1\rightarrow\mathcal{B}$ and $g_2:\mathcal{B}_2\rightarrow\mathcal{B}$ such that $g_2\circ f_2=g_1\circ f_1$.

\begin{thm}\label{amalgam}
The class $\mathbf{Lin}$ with respect to $\mathbf{Lin}$-homomorphisms has the amalgamation property.
\end{thm}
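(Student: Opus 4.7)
The approach is to build $\mathcal{B}$ by gluing $\mathcal{B}_1$ and $\mathcal{B}_2$ along (the images of) $\mathcal{B}_0$ and ordering the resulting set using the cuts that external elements induce on $\mathcal{B}_0$.

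Since each $f_i$ is strictly monotone on a linear order, it is automatically an order-embedding; I would therefore identify $\mathcal{B}_0$ with $f_1(\mathcal{B}_0) \subseteq \mathcal{B}_1$ and with $f_2(\mathcal{B}_0) \subseteq \mathcal{B}_2$. Under this identification the orders $\leq_1$ and $\leq_2$ agree on $\mathcal{B}_0$, and $0_{\mathcal{B}_0}, 1_{\mathcal{B}_0}$ remain least and greatest in each $\mathcal{B}_i$. Let $B := \mathcal{B}_1 \cup \mathcal{B}_2$, with common part $\mathcal{B}_0$, and set $0_{\mathcal{B}} := 0_{\mathcal{B}_0}$, $1_{\mathcal{B}} := 1_{\mathcal{B}_0}$.

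I would then define $\leq_{\mathcal{B}}$ on $B$ as follows. Inside each $\mathcal{B}_i$, $\leq_{\mathcal{B}}$ agrees with $\leq_i$. For $x \in \mathcal{B}_1 \setminus \mathcal{B}_0$ and $y \in \mathcal{B}_2 \setminus \mathcal{B}_0$, put $x <_{\mathcal{B}} y$ if some $a \in \mathcal{B}_0$ satisfies $x <_1 a \leq_2 y$, put $y <_{\mathcal{B}} x$ if some $a \in \mathcal{B}_0$ satisfies $y <_2 a \leq_1 x$, and otherwise---i.e., when $x$ and $y$ induce the same cut in $\mathcal{B}_0$---break the tie by declaring $x <_{\mathcal{B}} y$. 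Witnesses for both separation clauses would yield a contradictory pair of inequalities inside the linearly ordered set $\mathcal{B}_0$, so the alternatives are mutually exclusive and $\leq_{\mathcal{B}}$ is well-defined and total.

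To finish, I would verify that $\leq_{\mathcal{B}}$ is a linear order with $0_{\mathcal{B}}, 1_{\mathcal{B}}$ as extremes and that the inclusions $g_i : \mathcal{B}_i \hookrightarrow \mathcal{B}$ are strictly monotone and preserve $0$ and $1$, hence are $\mathbf{Lin}$-homomorphisms; the commutativity $g_1 \circ f_1 = g_2 \circ f_2$ is immediate from the identification of $\mathcal{B}_0$ inside $B$. The main obstacle is \emph{transitivity}, which requires a case split according to which of $\mathcal{B}_1, \mathcal{B}_2$ each of three successive elements lies in; the critical subcases chain separating witnesses in $\mathcal{B}_0$ using the coincidence of $\leq_1$ and $\leq_2$ there, and one must check that the tie-breaking convention is never contradicted by an order already forced by an explicit separator.
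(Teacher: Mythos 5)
Your construction is correct and follows the same basic route as the paper: after identifying $\mathcal{B}_0$ with its images under $f_1$ and $f_2$ (legitimate, since strictly monotone maps between linear orders are embeddings), both arguments order $\mathcal{B}_1\cup\mathcal{B}_2$ by keeping the given order on each side and declaring $x$ below $y$ across the two sides exactly when some element of $\mathcal{B}_0$ separates them. The one genuine divergence is the final step: the paper stops at the resulting \emph{partial} order $\sqsubseteq$ and invokes Zorn's Lemma (i.e.\ the order-extension principle) to linearize it, whereas you resolve the incomparable pairs --- those inducing the same cut on $\mathcal{B}_0$ --- by a uniform explicit tie-break and then verify linearity directly. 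Your version is more constructive and avoids the appeal to order extension, at the price of the transitivity case analysis you flag; that analysis does go through. For instance, if $x<_{\mathcal{B}}y$ by tie-break and $y<_{\mathcal{B}}z$ with $z\in\mathcal{B}_2$, then the cut of $z$ on $\mathcal{B}_0$ either equals that of $x$ (giving $x<_{\mathcal{B}}z$ by tie-break again) or strictly dominates it (producing a separator); and the tie-break is never contradicted by an already forced inequality, since any chain from $y$ down to $x$ would exhibit an element of $\mathcal{B}_0$ lying in the upper part of $y$'s cut and the lower part of $x$'s cut, contradicting the assumed equality of cuts. In both versions the inclusions $g_i$ are the required $\mathbf{Lin}$-homomorphisms and $g_1\circ f_1=g_2\circ f_2$ is immediate from the identification, so your plan is sound; it merely trades the paper's one-line appeal to Zorn for a longer but choice-free verification.
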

\begin{proof}
For $\mathcal{B}_0,\mathcal{B}_1,\mathcal{B}_2\in\mathbf{Lin}$, let $f_1:\mathcal{B}_0\rightarrow\mathcal{B}_1$ and $f_2:\mathcal{B}_0\rightarrow\mathcal{B}_2$ be two $\mathbf{Lin}$-homomorphisms. The goal is to define $\mathcal{B}\in\mathbf{Lin}$ such that there are two $\mathbf{Lin}$-homomorphisms  $g_1:\mathcal{B}_1\rightarrow\mathcal{B}$ and $g_2:\mathcal{B}_2\rightarrow\mathcal{B}$ such that $g_2\circ f_2=g_1\circ f_1$. \\
We first assume that $\mathcal{B}_0$ is a linear-ordered subset of both
$\mathcal{B}_1$ and $\mathcal{B}_2$ with ${0}_{\mathcal{B}_0}={0}_{\mathcal{B}_1}={0}_{\mathcal{B}_2}$ and
${1}_{\mathcal{B}_0}={1}_{\mathcal{B}_1}={1}_{\mathcal{B}_2}$.
Furthermore, $\mathcal{B}_1\cap\mathcal{B}_2=\mathcal{B}_0$.

Now, define a binary relation $\sqsubseteq$ on $\mathcal{B}_1\cup\mathcal{B}_2$ as follows. For $x,y\in \mathcal{B}_1\cup\mathcal{B}_2$,
$x\sqsubseteq y$ if and only if either of the following conditions holds:
$$
  \begin{array}{l}
     x, y\in \mathcal{B}_1\ \ \mbox{and}\ \ x\leq_{\mathcal{B}_1}y, \\
     x, y\in \mathcal{B}_2\ \ \mbox{and}\ \ x\leq_{\mathcal{B}_2}y,  \\
     x\in\mathcal{B}_1\setminus\mathcal{B}_2, y\in\mathcal{B}_2 \ \ \mbox{and} \ \ \exists\alpha\in\mathcal{B}_0\ (x\leq_{\mathcal{B}_1}\alpha\leq_{\mathcal{B}_2}y), \\
    x\in\mathcal{B}_1, y\in\mathcal{B}_2\setminus\mathcal{B}_1\ \ \mbox{and}\ \
   \exists\alpha\in\mathcal{B}_0\ (x\leq_{\mathcal{B}_2}\alpha\leq_{\mathcal{B}_1}y).
  \end{array}$$

It is easy to check that $\sqsubseteq$ is a partial-order. But,
by Zorn's Lemma, $\sqsubseteq$ can be extended to a linear-order $\leq$ on $\mathcal{B}_1\cup\mathcal{B}_2$.
Then, it is easy to see that the inclusion functions $id_{\mathcal{B}_i}=g_i:\mathcal{B}_i\rightarrow\mathcal{B}$, for $i=1,2$, give the desired  $\mathbf{Lin}$-homomorphisms.
Now, for the general case, one can find $\mathcal{B}'_1,\mathcal{B}'_2\in\mathbf{Lin}$ and $\mathbf{Lin}$-isomorphisms
$h_1:\mathcal{B}_1\to\mathcal{B}'_1$ and $h_2:\mathcal{B}_2\to\mathcal{B}'_2$ such that
$\mathcal{B}_0$ is a linear-ordered subset of $\mathcal{B}'_1$ and $\mathcal{B}'_2$,
$\mathcal{B}_0=\mathcal{B}'_1\cap\mathcal{B}'_2$, $ 0_{\mathcal{B}'_1}= 0_{\mathcal{B}'_2}= 0_{\mathcal{B}_0}$, $ 1_{\mathcal{B}'_1}= 1_{\mathcal{B}'_2}= 1_{\mathcal{B}_0}$ and $h_1\circ f_1=h_2\circ f_2=id_{\mathcal{B}_0}$.
Take $\mathcal{B}\in\mathbf{Lin}$ and $\mathbf{Lin}$-homomorphisms $g_1:\mathcal{B}'_1\to\mathcal{B}$ and
$g_2:\mathcal{B}'_2\to\mathcal{B}$ such that $g_1\circ id_{\mathcal{B}_0}=g_2\circ id_{\mathcal{B}_0}$.
Then, considering $\mathbf{Lin}$-homomorphisms  $g'_1=g_1\circ h_1$ and $g'_2=g_2\circ h_2$ implies
$g'_1\circ f_1=g_1\circ h_1\circ f_1=g_1\circ id_{\mathcal{B}_0}=g_2\circ id_{\mathcal{B}_0}=g_2\circ h_2\circ f_2=g'_2\circ f_2$. Hence, the general case is also established.
\end{proof}

Finally, the following lemma shows that for any countable $\mathcal{B}\in\mathbf{Lin}$, there exists a continuous $\mathbf{Lin}$-homomorphism $f:\mathcal{B}\rightarrow [0,1]$. Recall that for $\mathcal{B}_1,\mathcal{B}_2\in\mathbf{Lin}$, a $\mathbf{Lin}$-homorphism $f:\mathcal{B}_1\rightarrow\mathcal{B}_2$ is called \emph{continuous} if $f$ preserves all
existing $\inf$'s and $\sup$'s. More precisely, if $X\subseteq \mathcal{B}_1$ and has the least upper bound (respectively, the greatest lower bound) $\alpha$ then $f(\alpha)$ is the least upper bound (respectively, the greatest lower bound) of $f(X)$.

\begin{lem}\label{standard}\cite[Lemma 5.3.2]{hajek98}
For every countable  $\mathcal{B}\in\mathbf{Lin}$, there is a continuous $\mathbf{Lin}$-homomorphism $h:\mathcal{B}\rightarrow[0,1]$.
\end{lem}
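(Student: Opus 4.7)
My plan is to construct $h$ by recursion along a fixed enumeration of $\mathcal{B}$, placing each new element at the midpoint of the $[0,1]$-interval determined by its immediate neighbours among the finitely many points already fixed. Specifically, enumerate $\mathcal{B}=\{b_0,b_1,b_2,\dots\}$ with $b_0=0_{\mathcal{B}}$ and $b_1=1_{\mathcal{B}}$, set $h(b_0):=0$ and $h(b_1):=1$, and write $S_n:=\{b_0,\dots,b_{n-1}\}$. For $n\ge 2$ I would define
$$h(b_n):=\tfrac{1}{2}\bigl(h(p_n)+h(q_n)\bigr),\quad\text{where } p_n:=\max\{b\in S_n:b<b_n\},\ q_n:=\min\{b\in S_n:b>b_n\},$$
noting that $p_n,q_n$ exist because $b_0<b_n<b_1$ are in $S_n$. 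A routine induction on $n$ then shows that $h$ is strictly order-preserving on each $S_n$, hence on all of $\mathcal{B}$, so $h$ is already a $\mathbf{Lin}$-homomorphism with the required values at $0_{\mathcal{B}}$ and $1_{\mathcal{B}}$.

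The hard part will be continuity, i.e.\ preservation of those suprema and infima that happen to exist in $\mathcal{B}$; I will handle the sup case, the inf case being entirely dual. Suppose $X\subseteq\mathcal{B}$ and $\alpha:=\sup_{\mathcal{B}} X$ exists. Strict monotonicity already gives $\sup h(X)\le h(\alpha)$, and if $\alpha\in X$ the reverse inequality is trivial, so I will assume $\alpha\notin X$. In that case $\alpha$ cannot have an immediate predecessor in $\mathcal{B}$ (otherwise that predecessor would be a smaller upper bound of $X$), and $X$ must be cofinal in $\{b\in\mathcal{B}:b<\alpha\}$. Thus it will suffice to prove $\sup\{h(b):b\in\mathcal{B},\,b<\alpha\}=h(\alpha)$.

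For this key step, let $N$ be the stage at which $\alpha=b_N$ is introduced and, for $n\ge N$, put $P_n:=\max\{b\in S_n:b<\alpha\}$. The crucial observation will be that, since $\alpha$ has no immediate predecessor in $\mathcal{B}$, the set $(P_n,\alpha)\cap\mathcal{B}$ is infinite for every $n\ge N$ and so contains some $b_m$ with $m>n$; at such a stage the immediate successor of $b_m$ in $S_m$ is forced to be $\alpha$, so the recipe yields $h(b_m)=(h(P_m)+h(\alpha))/2$, and hence
$$h(\alpha)-h(P_{m+1})=\tfrac{1}{2}\bigl(h(\alpha)-h(P_m)\bigr).$$
Because this halving occurs infinitely often, the gap $h(\alpha)-h(P_n)$ tends to $0$, giving $\sup_n h(P_n)=h(\alpha)$ as required; the symmetric argument for infima then completes the proof. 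The subtle point I expect to have to state carefully is the cofinality reduction together with the infinitude of $(P_n,\alpha)\cap\mathcal{B}$, since without it the halving argument would stall after finitely many steps and continuity could fail.
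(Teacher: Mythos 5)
The paper offers no proof of this lemma---it simply cites H\'ajek's Lemma 5.3.2---and your midpoint-insertion construction is, in essence, the standard argument behind that citation: strict monotonicity on each $S_n$ is routine, and your halving argument correctly delivers preservation of the non-attained suprema and infima, which is the only delicate point. The one spot to tighten is the claim that ``the immediate successor of $b_m$ in $S_m$ is forced to be $\alpha$'': this holds only if you take the \emph{least} $m\ge n$ with $b_m\in(P_n,\alpha)$ (so that $S_m\cap(P_n,\alpha)=\emptyset$, making $P_m=P_n$ the predecessor and $\alpha$ the successor of $b_m$ in $S_m$); for a non-minimal such $m$ an earlier $b_j\in(b_m,\alpha)$ could intervene, but the minimal choice is all the infinitely-repeated halving needs.
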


\section{Craig Interpolation Property in $\mathbf{G}$}\label{sec2}

In this section, we prove the Craig interpolation property in first-order \g logic, $\mathbf{G}$. Although we accomplish this result in $\mathbf{G}$, to implement our model-theoretic analysis, we need to carry out our investigations within ${\mathbf G}^\Delta$ and view $\mathbf{G}$ as a sub-logic of ${\mathbf G}^\Delta$. In particular, as stated before, formulas and theories are considered within ${\mathbf G}^\Delta$. Moreover, formulas and theories inside $\mathbf{G}$ are indicated by $\mathbf{G}$-formulas and $\mathbf{G}$-theories, respectively.\\

Our key model-theoretic concept is the notion of separation of two
theories within their common languages. This notion stems in the proof of the Craig interpolation property in classical model theory  (\cite{CC}, Section 2.2).

\begin{defn}\label{separability}
Suppose $\mathcal{L}_1$ and $\mathcal{L}_2$ are two languages. Let $T$ and $U$ be two theories in $\mathcal{L}_1$ and $\mathcal{L}_2$, respectively.
\begin{enumerate}
\item   A closed formula $\theta$
 \emph{separates} $T$ and $U$ in $\mathcal{L}_1\cap\mathcal{L}_2$  , if $T\Vdash\theta$ and $U\Vdash\thicksim\theta$. $T$ and $U$ are called \emph{inseparable} if there is no such $\theta$.

\item If a closed $\mathbf{G}$-formula $\theta$ separates $T$ and $U$ then $T$ and $U$ are called \emph{$\mathbf{G}$-separable}. Otherwise, we say that  $T$ and $U$ are \emph{$\mathbf{G}$-inseparable}.
\end{enumerate}

\end{defn}

Below, we present some basic properties for both notions of inseparability.   Items 2-3 in the following lemma provide a technique to extend two $\mathbf{G}$-inseparable (inseparable) theories to complete theories yet $\mathbf{G}$-inseparable (inseparable).

\begin{lem}\label{eitheror}
Suppose  two theories $T$ in $\mathcal{L}_1$ and $U$ in $\mathcal{L}_2$ are $\mathbf{G}$-inseparable (inseparable). Then,
\begin{enumerate}
\item
both $T$ and $U$ are satisfiable.
\item

for any closed formula $\varphi$ in $\mathcal{L}_1$, either $T\cup\{\varphi\}$ and $U$ are $\mathbf{G}$-inseparable (inseparable) or $T\cup\{\thicksim \varphi\}$ and $U$ are $\mathbf{G}$-inseparable (inseparable).

\item
for any closed formula $\psi$ in $\mathcal{L}_2$, either $T$ and $U\cup\{\psi\}$ are $\mathbf{G}$-inseparable (inseparable) or $T$ and $U\cup\{\thicksim \psi\}$ are $\mathbf{G}$-inseparable (inseparable).

\end{enumerate}

\end{lem}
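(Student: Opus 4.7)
The plan is to handle the $\mathbf{G}$-inseparable and inseparable versions uniformly, since both arguments rely only on closure of the new separator under $\vee$ and $\wedge$, both of which preserve the $\mathbf{G}$-fragment. For (1), I argue by contrapositive: if $T$ were unsatisfiable then $T\Vdash\bot$, while every valuation forces $v(\thicksim\bot)=1$, giving $U\Vdash\thicksim\bot$ vacuously. Since $\bot\in\mathcal{L}_1\cap\mathcal{L}_2$ is a $\mathbf{G}$-formula, it would $\mathbf{G}$-separate (and separate) $T$ from $U$, contradicting the hypothesis; the case of $U$ is symmetric.

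For (2), I proceed by contradiction: suppose that for some closed $\varphi\in\mathcal{L}_1$, both the pair $T\cup\{\varphi\},U$ and the pair $T\cup\{\thicksim\varphi\},U$ are separable, witnessed by closed $\theta_1,\theta_2\in\mathcal{L}_1\cap\mathcal{L}_2$ with $T\cup\{\varphi\}\Vdash\theta_1$, $U\Vdash\thicksim\theta_1$, $T\cup\{\thicksim\varphi\}\Vdash\theta_2$, $U\Vdash\thicksim\theta_2$, the $\theta_i$ being $\mathbf{G}$-formulas in the $\mathbf{G}$-version. Set $\theta:=\theta_1\vee\theta_2$; it lies in $\mathcal{L}_1\cap\mathcal{L}_2$ and remains a $\mathbf{G}$-formula whenever the $\theta_i$ are. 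From $T\cup\{\varphi\}\Vdash\theta$ and $T\cup\{\thicksim\varphi\}\Vdash\theta$, item~(9) of Lemma~\ref{property} yields $T\Vdash\theta$; from $U\Vdash\thicksim\theta_1$ and $U\Vdash\thicksim\theta_2$, item~(10) of the same lemma yields $U\Vdash\thicksim\theta$. Hence $\theta$ separates $T$ and $U$, the desired contradiction.

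For (3), I use the dual construction $\theta:=\theta_1\wedge\theta_2$. Given closed separators of $T$ from $U\cup\{\psi\}$ and from $U\cup\{\thicksim\psi\}$, the entailments $T\Vdash\theta_1$ and $T\Vdash\theta_2$ give $T\Vdash\theta$ at once. On the $U$-side, $\theta\Vdash\theta_i$ together with Lemma~\ref{property}(5) gives $\thicksim\theta_i\Vdash\thicksim\theta$, so both $U\cup\{\psi\}$ and $U\cup\{\thicksim\psi\}$ $1$-entail $\thicksim\theta$; then Lemma~\ref{property}(9) delivers $U\Vdash\thicksim\theta$, again a contradiction. The only delicate point (rather than a genuine obstacle) is preserving the $\mathbf{G}$-fragment of the separator: this is automatic because $\vee$ and $\wedge$ are $\mathbf{G}$-connectives, whereas $\thicksim$ appears only on the $U$-side witnesses, where items~(5),~(9), and~(10) of Lemma~\ref{property} absorb it.
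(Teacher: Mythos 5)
Your proposal is correct and follows essentially the same route as the paper: $\bot$ (resp.\ $\top$) as the trivial separator for item (1), and the disjunction $\theta_1\vee\theta_2$ combined with Lemma~\ref{property}(9),(10) for item (2). For item (3) the paper merely says ``same as above,'' and your explicit dual argument via $\theta_1\wedge\theta_2$ together with Lemma~\ref{property}(5) and (9) is exactly the intended completion.
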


\begin{proof}
We only show the above statements when $T$ and $U$ are $\mathbf{G}$-inseparable.

\begin{enumerate}
\item
Suppose $T$ is not satisfiable. So, $T\Vdash\bot$ while $U\Vdash\thicksim\bot$. This implies $T$ and $U$ are $\mathbf{G}$-separable, a contradiction. On the other hand, if $U$ is not satisfiable then $T\Vdash\top$ whereas $U\Vdash\thicksim\top$. Again, this contradicts $\mathbf{G}$-inseparability of $T$ and $U$.
\item
Assume on the contrary, for some closed formula $\varphi$ in $\mathcal{L}_1$, $T\cup\{\varphi\}$ and $U$ as well as  $T\cup\{\thicksim \varphi\}$ and $U$ are $\mathbf{G}$-separable. Then, there exist two closed $\mathbf{G}$-formulas $\theta_1$ and $\theta_2$ in $\mathcal{L}_1\cap\mathcal{L}_2$ such that
\begin{align*}
T\cup\{\varphi\}\Vdash \theta_1 &, U\Vdash\thicksim\theta_1,\\
T\cup\{\thicksim \varphi\}\Vdash \theta_2 &, U\Vdash\thicksim\theta_2.
\end{align*}
Therefore, $T\cup\{\varphi\}\Vdash \theta_1\vee\theta_2$ and $T\cup\{\thicksim\varphi\}\Vdash \theta_1\vee\theta_2$ which implies $T\Vdash\theta_1\vee\theta_2$, by Lemma \ref{property}(9). Moreover,
$U\Vdash\thicksim\theta_1$ and $U\Vdash\thicksim\theta_2$ lead to $U\Vdash\thicksim(\theta_1\vee\theta_2)$, by Lemma \ref{property}(10). So, $\theta_1\vee\theta_2$ $\mathbf{G}$-separates $T$ and $U$, a contradiction.
\item
This can be proved same as the above.
\end{enumerate}
\end{proof}
Now, it is turn to show the Craig interpolation property in $\mathbf{G}$. First, we introduce the following notation.

\begin{nota}
For a closed formula $\varphi$, let the language $\mathcal{L}_\varphi=\mathcal{R}_\varphi\cup C_\varphi$ where $\mathcal{R}_\varphi$ and $C_\varphi$ are respectively the sets of relation and constant symbols occur in $\varphi$.
 \end{nota}

\begin{thm}\label{CraigforG}(Craig interpolation property in ${\mathbf G}$) Suppose $\varphi$ and $\psi$ are two closed $\mathbf{G}$-formulas with $\varphi\Vdash\psi$. Then, there exists a closed $\mathbf{G}$-formula $\theta$ in $\mathcal{L}_\varphi\cap\mathcal{L}_\psi$ such that $\varphi\Vdash \theta$ and $\theta\Vdash\psi$.

\end{thm}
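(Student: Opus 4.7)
My plan is to mimic the classical Henkin--Robinson proof of Craig interpolation, working inside $\mathbf{G}^\Delta$ and exploiting the amalgamation machinery of Section~\ref{sec1}. Assume for contradiction that there is no interpolant, and set $T_0:=\{\varphi\}$ in $\mathcal{L}_\varphi$ and $U_0:=\{\thicksim\psi\}$ in $\mathcal{L}_\psi$. By Lemma~\ref{property}(5), which gives $\theta\Vdash\psi$ iff $\thicksim\psi\Vdash\thicksim\theta$, the non-existence of an interpolant translates directly into $\mathbf{G}$-inseparability of $T_0$ and $U_0$ in the sense of Definition~\ref{separability}: an interpolant $\theta$ is exactly a $\mathbf{G}$-separator.

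The first step is to enlarge $(T_0,U_0)$ to a pair $(T,U)$ of complete Henkin theories in extended languages $\mathcal{L}_1\supseteq\mathcal{L}_\varphi$ and $\mathcal{L}_2\supseteq\mathcal{L}_\psi$ while keeping them $\mathbf{G}$-inseparable throughout. I would enumerate all closed formulas of both sides, at each stage using Lemma~\ref{eitheror} to add $\chi$ or $\thicksim\chi$ on the appropriate side without destroying $\mathbf{G}$-inseparability, and Lemma~\ref{constant}(7) to supply Henkin witnesses from a single shared countable pool of fresh constants. This coordination guarantees that the common language $\mathcal{L}_0:=\mathcal{L}_1\cap\mathcal{L}_2$ contains the whole shared pool in addition to $\mathcal{L}_\varphi\cap\mathcal{L}_\psi$, so $\mathcal{L}_0$ is rich enough for the subsequent steps.

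Next I build the Lindenbaum $\mathbf{G}$-algebras $\mathcal{B}_T$ and $\mathcal{B}_U$ on closed $\mathbf{G}$-formulas of $\mathcal{L}_1$ and $\mathcal{L}_2$ respectively; since $T$ and $U$ are $\thicksim$-complete they are in particular $\mathbf{G}$-linearly complete, so these are objects of $\mathbf{Lin}$. Let $\mathcal{B}_0$ be the analogous algebra over $\mathcal{L}_0$. Here $\mathbf{G}$-inseparability is exactly what makes $\mathcal{B}_0$ embed into both $\mathcal{B}_T$ and $\mathcal{B}_U$ as a $\mathbf{Lin}$-subalgebra: if $T\Vdash\theta_1\to\theta_2$ for $\theta_i\in\mathcal{L}_0$ while $U\not\Vdash\theta_1\to\theta_2$, completeness of $U$ forces $U\Vdash\thicksim(\theta_1\to\theta_2)$, and $\theta_1\to\theta_2$ would $\mathbf{G}$-separate $T$ and $U$. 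Applying the amalgamation Theorem~\ref{amalgam} over $\mathcal{B}_0$ and then composing with a continuous $\mathbf{Lin}$-homomorphism into $[0,1]$ from Lemma~\ref{standard}, I obtain $h_T:\mathcal{B}_T\to[0,1]$ and $h_U:\mathcal{B}_U\to[0,1]$ that agree on $\mathcal{B}_0$.

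Finally, I assemble a valuation $v$ on $\mathcal{L}_1\cup\mathcal{L}_2$ with universe the set of constants, interpreting $R^v(\bar c):=h_T([R(\bar c)])$ for atomic $R(\bar c)$ in $\mathcal{L}_1$ and symmetrically for $\mathcal{L}_2$. A standard induction on closed $\mathbf{G}$-formulas, using Henkinness for the quantifier cases and continuity of $h_T$, $h_U$ to push $\inf$ and $\sup$ through the order embeddings, yields $v(\chi)=h_T([\chi])$ for $\chi$ in $\mathcal{L}_1$ (and analogously for $\mathcal{L}_2$). Then $v(\varphi)=1$ but $v(\thicksim\psi)=1$, so $v(\psi)<1$, contradicting $\varphi\Vdash\psi$. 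The principal obstacle I anticipate is the coordination in the first step, so that a single shared constant pool simultaneously Henkinizes $T$ and $U$, together with the verification of the quantifier case in the last induction, where the interplay of the Henkin property with the continuity granted by Lemma~\ref{standard} is decisive.
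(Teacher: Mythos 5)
Your proposal follows essentially the same route as the paper: translate the absence of an interpolant into $\mathbf{G}$-inseparability, run a coordinated Henkin construction over a shared pool of fresh constants while preserving inseparability (Lemma \ref{eitheror} and Lemma \ref{constant}(7)), amalgamate the three Lindenbaum linear orders via Theorem \ref{amalgam}, map into $[0,1]$ by Lemma \ref{standard}, and read off a term-model valuation refuting $\varphi\Vdash\psi$. Two points are genuinely missing, though both are repairable. First, your opening reduction is stated only for the original common language: Lemma \ref{property}(5) does show that an interpolant in $\mathcal{L}_\varphi\cap\mathcal{L}_\psi$ is the same thing as a $\mathbf{G}$-separator of $\{\varphi\}$ and $\{\thicksim\psi\}$ in that language, but your construction immediately enlarges the common language by the shared pool $C$, which admits more potential separators. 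You must also rule out a separator $\theta(c_1,\dots,c_n)$ mentioning the new constants; the paper does this by generalizing over the fresh constants via Lemma \ref{constant}(5) to recover an interpolant $\forall \bar{x}\ \theta(\bar{x})$ in the original common language, contradicting the hypothesis.

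Second, and more seriously, you never address constants private to one of $\varphi$, $\psi$. Your valuation takes as universe ``the set of constants,'' and the final induction needs, for every closed $\mathbf{G}$-formula of $\mathcal{L}_2$ and every instance obtained by substituting universe elements for quantified variables, that the resulting formula still lies in $\mathcal{L}_2$, so that its class in $\mathcal{B}_U$ is defined. If $d$ is a constant of $\mathcal{L}_\varphi\setminus\mathcal{L}_\psi$ and $d$ lies in the universe $M$, then the quantifier step $v(\exists y\ \rho(y))=\sup_{a\in M}v(\rho(a))$ involves the instance $\rho(d)$, which is not an $\mathcal{L}_2$-formula, and the inductive hypothesis (and the continuity argument through $h_U$) breaks down there. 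The paper avoids this by a preliminary reduction (its Case 2): using Lemma \ref{constant}(5) and (6) it passes from $\varphi(\bar{d},\bar{c})\Vdash\psi(\bar{e},\bar{c})$ to $\exists\bar{x}\ \varphi(\bar{x},\bar{c})\Vdash\forall\bar{y}\ \psi(\bar{y},\bar{c})$, a pair whose only constants are shared, proves interpolation for that pair, and observes that the resulting $\theta$ also interpolates $\varphi$ and $\psi$. You need this step, or an equivalent device, for the theorem as stated.
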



\begin{proof}
Suppose on the contrary that $\varphi\Vdash\psi$, but there is no closed $\mathbf{G}$-formula $\theta$ which satisfies the conclusion of the theorem. Let
$\mathcal{L}_0=\mathcal{L}_\varphi\cap\mathcal{L}_\psi$ and $\mathcal{L}=\mathcal{L}_\varphi\cup\mathcal{L}_\psi$.  So, $\mathcal{L}_0=C_0\cup \mathcal{R}_0$ where $C_0=C_\varphi\cap C_\psi$ and $\mathcal {R}_0=\mathcal {R}_\varphi\cap\mathcal {R}_\psi$. Now, we consider two cases:\\

$\mathbf{Case\ 1:}$ $C_0=C_{\varphi}=C_{\psi}$.

Consider $C$ to be a countably infinite set of new constant symbols, and let $\mathcal{L}'_\varphi=\mathcal{L}_\varphi\cup C$,
$\mathcal{L}'_\psi=\mathcal{L}_\psi\cup C$, $\mathcal{L}'_0=\mathcal{L}_0\cup C$, and
$\mathcal{L}'=\mathcal{L}\cup C$. \\

Note that if we consider $\mathcal{L}'_\varphi$-theory $T_0=\{\varphi\}$ and $\mathcal{L}'_\psi$-theory $U_0=\{\thicksim\psi\}$ then they are $\mathbf{G}$-inseparable. Suppose not. Then, there is a closed $\mathbf{G}$-formula $\theta(c_1,\dots,c_n)$ in $\mathcal{L}'_0$ such that $\varphi\Vdash\theta(c_1,\dots,c_n)$ and $\thicksim\psi\Vdash\thicksim\theta(c_1,\dots,c_n)$. Thus, by Lemma \ref{property}(5), $\theta(c_1,\dots,c_n)\Vdash \psi$. Now, since $c_1,\dots,c_n$ do not occur in $\varphi$, by Lemma \ref{constant}(5),  we can conclude $\varphi\Vdash \forall x_1,\dots,x_n\ \theta(x_1,\dots,x_n)$. But, $\forall x_1,\dots,x_n\ \theta(x_1,\dots,x_n)\Vdash \theta(c_1,\dots,c_n)$ and $\theta(c_1,\dots,c_n)\Vdash\psi$. Hence, $\forall x_1,\dots,x_n\ \theta(x_1,\dots,x_n)\Vdash\psi$ and $\varphi\Vdash \forall x_1,\dots,x_n\ \theta(x_1,\dots,x_n)$ which contradicts our assumption.\\

Now, let $\varphi_1,\varphi_2,\dots$ be an enumeration of all closed formulas in $\mathcal{L}'_\varphi$ and respectively,
$\psi_1,\psi_2,\cdots$ be an enumeration of all closed formulas in $\mathcal{L}'_\psi$.\\

Define two increasing sequences of theories $\{T_m\}_{m\in\mathbb{N}}$ in $\mathcal{L}'_\varphi$ and $\{U_m\}_{m\in\mathbb{N}}$ in $\mathcal{L}'_\psi$
\begin{center}
$\{\varphi\}=T_0\subseteq T_1\subseteq T_2\subseteq \dots$ \\
$\{\thicksim\psi\}=U_0\subseteq U_1\subseteq U_2\subseteq \dots$.
\end{center}
such that for every $m\in\mathbb{N}$, $T_m$ and $U_m$ have the following properties:

\begin{enumerate}
\item
$T_m$ and $U_m$ are $\mathbf{G}$-inseparable finite sets of closed formulas.

\item For each $m$, either  $\varphi_m\in T_{m+1}$ or  $\thicksim \varphi_m\in T_{m+1}$.

\item For each $m$, either  $\psi_m\in U_{m+1}$ or $\thicksim\psi_m\in U_{m+1}$.
\item   If $\varphi_m=\forall x\ \sigma(x)\notin T_{m+1}$ then there exists $c\in C$ such that $\thicksim\sigma(c)\in T_{m+1}$.

\item If $\psi_m=\forall x\ \chi(x)\notin U_{m+1}$ then there exists $d\in C$ such that $\thicksim\chi(d)\in U_{m+1}$.
\end{enumerate}

Suppose for $m\in\mathbb{N}$, the theories $T_m$ and $U_m$ are already constructed. Then, Lemma  \ref{eitheror}(2) guaranties either  $T_m\cup\{\varphi_m\}$ and $U_m$ are $\mathbf{G}$-inseparable or $T_m\cup\{\thicksim \varphi_m\}$ and $U_m$ are $\mathbf{G}$-inseparable. In the former case, we let $T_{m+1}= T_m\cup\{\varphi_m\}$. Otherwise, unless $\varphi_m=\forall x\ \sigma(x)$, we let $T_{m+1}= T_m\cup\{\thicksim \varphi_m\}$. Now, suppose  $\varphi_m=\forall x\ \sigma(x)$. Also, $T_m\cup \{\thicksim \forall x\ \sigma(x)\}$ and $U_m$ are $G$-inseparable. Then, if $c\in C$ is a constant symbol which does not occur in $T_m\cup U_m\cup \{\varphi_m\}$ then set $T_{m+1}=T_m\cup\{\thicksim \forall  x\ \sigma(x)\}\cup\{\thicksim\sigma(c)\}$.  We show that $T_{m+1}$ and $U_m$ are $\mathbf{G}$-inseparable. Suppose not. Then, there is a closed $\mathbf{G}$-formula $\theta(c)$ such that
$T_m\cup\{\thicksim \forall  x\ \sigma(x)\}\cup\{\thicksim\sigma(c)\}\Vdash\theta(c)$ and $U_m\Vdash\thicksim\theta(c)$. By Lemma \ref{constant}(7) and (5), $T_m\cup\{\thicksim \forall  x\ \sigma(x)\}\Vdash \exists x\ \theta(x)$ and $U_m\Vdash \forall x\ \thicksim\theta(x)$. So $U_m\Vdash \thicksim\exists x\ \theta(x)$,  by Lemma \ref{constant}(4). This contradicts the fact that $T_m\cup\{\thicksim\forall x\ \sigma(x)\}$ and $U_m$ are $\mathbf{G}$-inseparable.

 $U_{m+1}$ can be defined similarly to retain  conditions $1-5$.\\

Set $T_\omega=\bigcup_{m<\omega}T_m$ and $U_\omega=\bigcup_{m<\omega}U_m$.  \\
 Subsequently, define
$$T^{\prime}_{\omega}=\{\theta\in \mathcal{L}'_\varphi\ : \theta \ \hbox{is a $\mathbf{G}$-formula and\ }  T_\omega \Vdash \theta\},$$
and
$$U^{\prime}_{\omega}=\{\theta\in \mathcal{L}'_\psi\ : \theta \ \hbox{is a $\mathbf{G}$-formula and\ }  U_\omega \Vdash \theta\}.$$
Then, the following properties hold:
\begin{enumerate}

\item   $T_\omega$ and $U_\omega$ are $\mathbf{G}$-inseparable. Hence, both of them are satisfiable. Furthermore, they are $1$-entailment closed.

{\em Proof.} Since $T_m$ and $U_m$ are $\mathbf{G}$-inseparable, for each $m$, the pair $T_\omega$ and $U_\omega$ are also $\mathbf{G}$-inseparable, by 1-entailment compactness. We only show that $T_{\omega}$ is $1$-entailment closed. Suppose $T_{\omega}\Vdash \varphi$ and  $\varphi\notin T_{\omega}$.  Then, by condition 2 above, $\thicksim \varphi\in T_{\omega}$. But, this implies that $T_{\omega}$ is not satisfiable, a contradiction.

\item
$T^{\prime}_{\omega}$ and $U^{\prime}_{\omega}$ are $\mathbf{G}$-inseparable. Hence, both are satisfiable.

{\em Proof.}
Since both $T_\omega$ and $U_\omega$ are $1$-entailment closed, we have that $T^{\prime}_{\omega}\subseteq T_\omega$ and $U^{\prime}_{\omega}\subseteq U_\omega$. Thus, $T'_\omega$ and $U'_\omega$ are  $\mathbf{G}$-inseparable.

\item
$T^{\prime}_\omega$  and $U^{\prime}_\omega$ are $\mathbf{G}$-linearly complete Henkin $\mathbf{G}$-theories in $\mathcal{L}'_\varphi$ and $\mathcal{L}'_\psi$, respectively.

{\em Proof.}
We only show the claim for $T^{\prime}_\omega$.  To show $\mathbf{G}$-linearly completeness, suppose
$T^{\prime}_\omega\nVdash\varphi_m=\theta_1\rightarrow \theta_2$, for two $\mathbf{G}$-formulas $\theta_1$ and $\theta_2$. So, $\varphi_m\notin T^{\prime}_\omega$ and  thus, $\thicksim\varphi_m\in T_\omega$. But, Lemma \ref{property}(8) implies  $\thicksim (\theta_1\rightarrow \theta_2) \Vdash \theta_2\rightarrow \theta_1$ . Hence, $\theta_2\rightarrow \theta_1\in T^{\prime}_{\omega}$. \\
 Finally, notice that if $T^{\prime}_{\omega}\nVdash\forall x\ \sigma(x)$ then $\forall x\ \sigma(x)\notin T_\omega$. Hence, by condition 4, there exists $c\in C$ such that $\thicksim \sigma(c)\in T_\omega$. Thus, $T'_\omega\nVdash \sigma(c)$. Therefore,  $T^{\prime}_{\omega}$ is Henkin.
\item
$T^{\prime}_\omega\cap U^{\prime}_\omega$ is a satisfiable $\mathbf{G}$-linearly complete Henkin $\mathbf{G}$-theory in $\mathcal{L}'_0$.

{\em Proof.}
Satisfiability of $T^{\prime}_\omega\cap U^{\prime}_\omega$ is obvious. Let $\theta_1$ and $\theta_2$ be two closed $\mathbf{G}$-formulas in $\mathcal{L}'_0$ and $\sigma=(\theta_1\rightarrow \theta_2)$. Then, by the construction of $T_\omega$ and $U_\omega$,
$$\sigma\in T_\omega\ \hbox{or}\ \thicksim\sigma\in T_\omega,$$
and
$$\sigma\in U_\omega\ \hbox{or}\ \thicksim\sigma\in U_\omega.$$
Moreover, by $\mathbf{G}$-inseparability of $T_\omega$ and $U_\omega$, it is not possible either, $\sigma\in T_\omega$ and $\thicksim\sigma\in U_\omega$, or,
$\sigma\in U_\omega$ and $\thicksim\sigma\in T_\omega$. Hence, either $(\theta_1\rightarrow \theta_2) \in T^{\prime}_\omega\cap U^{\prime}_\omega$ or $(\theta_2\rightarrow \theta_1)\in T^{\prime}_\omega\cap U^{\prime}_\omega$. Also, it is clear that $T^{\prime}_\omega\cap U^{\prime}_\omega$ is Henkin.
\end{enumerate}
Now, let $\mathcal{B}_0=\mathcal{B}_{T^{\prime}_\omega\cap U^{\prime}_\omega}$,  $\mathcal{B}_1=\mathcal{B}_{T^{\prime}_\omega}$ and  $\mathcal{B}_2=\mathcal{B}_{U^{\prime}_\omega}$ be the countable linear-ordered sets  introduced in Definition \ref{simT}.
So, $\mathcal{B}_0$, $\mathcal{B}_1$, and $\mathcal{B}_2$ consist of equivalence classes with respect to equivalence relations $\sim_{T^{\prime}_\omega\cap U^{\prime}_\omega}$, $\sim_{T^{\prime}_\omega}$, and $\sim_{U^{\prime}_\omega}$, respectively. Denote the equivalence classes in
$\mathcal{B}_0$, $\mathcal{B}_1$ and $\mathcal{B}_2$  by $[.]_0$, $[.]_\varphi$ and
$[.]_\psi$, respectively. Then, there are two $\mathbf{Lin}$-homomorphisms $f_1:\mathcal{B}_0\rightarrow \mathcal{B}_1$ and $f_2:\mathcal{B}_0\rightarrow \mathcal{B}_2$ defined as $f_1([\theta]_0)=[\theta]_\varphi$ and $f_2([\theta]_0)=[\theta]_\psi$. By Theorem \ref{amalgam},  the class of $\mathbf{Lin}$ with $\mathbf{Lin}$-homomorphisms has the amalgamation property. Thus,  there exist a countable  $\mathcal{B}\in \mathbf{Lin}$ and two $\mathbf{Lin}$-homomorphisms $g_1:\mathcal{B}_1\rightarrow \mathcal{B}$ and  $g_2:\mathcal{B}_2\rightarrow \mathcal{B}$  with   $g_2\circ f_2=g_1\circ f_1$.  Moreover, on the basis of Lemma \ref{standard}, there exists a continuous  $\mathbf{Lin}$-homomorphism  $h: \mathcal{B}\rightarrow [0,1]$.\\

Define a standard $\mathcal{L}'$-valuation $v=(M,\dots)$ as follows:

\begin{enumerate}
\item
$M=C\cup C_0$.
\item\label{relation1}
For every $n$-array relation symbol $R\in \mathcal {R}_\varphi$ and $\bar d\in M^n$, $$R^v(\bar{d})=h(g_1([R(\bar{d})]_\varphi)).$$
\item\label{relation2}
For every $n$-array relation symbol $R\in \mathcal {R}_\psi$ and $\bar d\in M^n$, $$R^v(\bar{d})=h(g_2([R(\bar{d})]_\psi)).$$

\item For every constant symbols $c\in M$, $c^v=c$.
\end{enumerate}

Notice that if $R\in\mathcal{R}_\varphi\cap\mathcal{R}_\psi$ then
$g_1([R(\bar {d})]_\varphi)=g_1\circ f_1([R(\bar d)]_0)=g_2\circ f_2([R(\bar d)]_0)=g_2([R(\bar d)]_\psi)$. So, the above definition is well-defined.
We claim that
\begin{enumerate}
\item
 for every $\mathbf{G}$-formula $\theta(x_1,\dots,x_k)\in \mathcal {L}_\varphi$ and $\bar d\in M^k$, $v(\theta(\bar{d}))=h(g_1([\theta(\bar{d})]_\varphi))$,
 \item

 for every $\mathbf{G}$-formula $\theta(x_1,\dots,x_k)\in \mathcal {L}_\psi$ and $\bar d\in M^k$, $v(\theta(\bar{d}))=h(g_2([\theta(\bar{d})]_\psi))$.

 \end{enumerate}
The above claims particularly  imply for  every $\mathbf{G}$-formula $\theta(x_1,\dots,x_k)\in \mathcal {L}_0$ and $\bar d\in M^k$, $v(\theta(\bar{d}))=h(g_1\circ f_1([\theta(\bar{d})]_0))=h(g_2\circ f_2([\theta(\bar{d})]_0))$.
Both claims can be shown by induction on the complexity of formulas. We only show the first claim.
\begin{enumerate}
\item
Induction base for atomic formulas as well as induction steps for $\wedge$, $\vee$, $\to$ and $\Delta$ use the above definitions and the fact that $h$ is a strictly monotone function with
$h([\top]_\varphi)=1$  and we leave them for readers to verify.
\item
Suppose a $\mathbf{G}$-formula of the form $\theta(x_1,\dots,x_k):=\exists y\  \psi(y,x_1,\dots,x_k)$. Now, since $h$ is a strictly monotone continuous function, we have for $\bar d\in M^k$,
\begin{eqnarray*}
v(\theta(\bar{d}))&=&\sup_{e\in M}\ v(\psi(e,\bar{d}))=\sup_{e\in M} \ h([\psi(e,\bar{d})]_\varphi)\\
&=&h(\sup_{e\in M}\ [\psi(e,\bar{d})]_\varphi)=h([\exists x\ \psi(x,\bar{d})]_\varphi).
\end{eqnarray*}
\end{enumerate}

The above claim particularly implies $v(\varphi)=h([\varphi]_\varphi)=1$ while $v(\psi)=h([\psi]_\psi)<1$. But, this contradicts the fact that $\varphi\Vdash\psi$.\\

$\mathbf{Case\  2}$:  For the general case, assume $C_0=\{c_1,\dots,c_n\}$, $C_\varphi=C_0\cup\{d_1,\dots,d_k\}$ and
$C_\psi=C_0\cup\{e_1,\dots,e_l\}$. Then,

$$\varphi(\bar d, \bar c)\Vdash \psi(\bar e,\bar c).$$

In the light of Lemma \ref{constant}(5), since $\bar{e}$ do not occur in $\varphi(\bar d, \bar c)$, we have

$$\varphi(\bar d, \bar c)\Vdash \forall \bar{y}\ \psi(\bar{y},\bar c).$$
Again, by  Lemma \ref{constant}(6), since $\bar{d}$ do not occur in $\forall \bar y\ \psi(\bar y,\bar c)$, it follows that

$$\exists\bar x\ \varphi(\bar x, \bar c)\Vdash \forall \bar y\ \psi(\bar y,\bar c).$$
Hence, by Case 1, there exists a closed $\mathbf{G}$-formula $\theta$ in $\mathcal{L}_0$ such that
$\exists\bar x\ \varphi(\bar x, \bar c)\Vdash \theta$ and $\theta\Vdash \forall \bar{y}\ \psi(\bar y,\bar c)$. So, $\varphi\Vdash \theta$ and $\theta\Vdash \psi$.
\end{proof}

\begin{remark}
One can easily adapt the proof of Theorem \ref{CraigforG} to show that the Craig interpolation property holds in $\mathbf{G}^{\Delta}$. To this end, one should first replace the notion of $\mathbf{G}$-inseparability with inseparability, Definition \ref{separability}(1). Notice that both notions of inseparability share the same properties given in Lemma \ref{eitheror}.

\end{remark}

\begin{thm}\label{craigdelta}(Craig interpolation in ${\mathbf G}^\Delta$) Suppose $\varphi$ and $\psi$ are two closed formulas in the logic $\mathbf{G}^\Delta$. If $\varphi\Vdash\psi$ then there exists a closed formula $\theta$ in $\mathcal{L}_\varphi\cap\mathcal{L}_\psi$ such that $\varphi\Vdash \theta$ and $\theta\Vdash\psi$.

\end{thm}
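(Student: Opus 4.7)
The plan is to execute the proof of Theorem \ref{CraigforG} essentially verbatim, with two systematic replacements: (i) replace $\mathbf{G}$-inseparability throughout by inseparability in the sense of Definition \ref{separability}(1), permitting the separating formula to contain $\Delta$; and (ii) let the enumerations $\varphi_1,\varphi_2,\dots$ and $\psi_1,\psi_2,\dots$ list all closed $\mathbf{G}^\Delta$-formulas of $\mathcal{L}'_\varphi$ and $\mathcal{L}'_\psi$, not just $\mathbf{G}$-formulas. As pointed out in the remark preceding the statement, the properties of inseparability recorded in Lemma \ref{eitheror} hold verbatim, because that lemma only invokes Lemma \ref{property}(9),(10), neither of which is sensitive to the presence of $\Delta$. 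Likewise, the reduction of Case 2 to Case 1 via Lemma \ref{constant}(5),(6) is insensitive to $\Delta$.

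In Case 1, I would take $T_0=\{\varphi\}$, $U_0=\{\thicksim\psi\}$, check inseparability exactly as in the $\mathbf{G}$-proof, and construct chains $T_m\subseteq T_{m+1}$ and $U_m\subseteq U_{m+1}$ by iterating Lemma \ref{eitheror} together with the Henkin step supplied by Lemma \ref{constant}(7). Passing to the unions $T_\omega,U_\omega$ and their $1$-entailment closures $T'_\omega,U'_\omega$, the four numbered clauses from the original proof transfer: $T'_\omega$ and $U'_\omega$ are inseparable, satisfiable, $1$-entailment closed, and now \emph{linearly complete in the full sense of Lemma \ref{eqd}} (hence $\thicksim$-complete) and Henkin. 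This yields the countable $\mathbf{G}^\Delta$-algebras $\mathcal{B}_0,\mathcal{B}_1,\mathcal{B}_2$ of Definition \ref{simT}, together with the amalgamation diagram from Theorem \ref{amalgam} and a continuous $\mathbf{Lin}$-homomorphism $h:\mathcal{B}\rightarrow [0,1]$ from Lemma \ref{standard}. Define the valuation $v$ as in the $\mathbf{G}$-case, using $h\circ g_1$ on atomic formulas of $\mathcal{L}'_\varphi$ and $h\circ g_2$ on those of $\mathcal{L}'_\psi$; well-definedness on the common fragment follows from $g_1\circ f_1=g_2\circ f_2$.

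The only genuinely new work is extending the inductive identity $v(\theta(\bar d))=h(g_i([\theta(\bar d)]))$ to the connective $\Delta$. The key observation is that a $\mathbf{Lin}$-homomorphism is strictly monotone and preserves $0_\mathcal{B}$ and $1_\mathcal{B}$, so $h(x)=1$ iff $x=1_\mathcal{B}$ and $h(x)=0$ iff $x=0_\mathcal{B}$. Hence, by induction hypothesis, $v(\theta)=1$ iff $[\theta]=[\top]$ iff $T'_\omega\Vdash\theta$. If $v(\theta)=1$ then $[\Delta\theta]=[\top]$, giving $h(g_i([\Delta\theta]))=1=v(\Delta\theta)$. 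Otherwise $T'_\omega\not\Vdash\theta$, and by $\thicksim$-completeness $T'_\omega\Vdash\thicksim\theta$, i.e.\ $T'_\omega\Vdash\Delta\theta\to\bot$; thus $[\Delta\theta]=[\bot]$ and $h(g_i([\Delta\theta]))=0=v(\Delta\theta)$, matching the semantics of $\Delta$.

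With the induction complete, $v(\varphi)=h(g_1([\varphi]))=1$ and $v(\psi)=h(g_2([\psi]))<1$, contradicting $\varphi\Vdash\psi$. The principal obstacle is the $\Delta$ step just described; it is not deep but is the only place where the $\mathbf{G}^\Delta$-proof genuinely departs from the $\mathbf{G}$-proof, and it rests squarely on $\thicksim$-completeness of $T'_\omega$ together with strict monotonicity of $h$. Everything else is a careful transcription.
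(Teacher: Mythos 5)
Your proposal follows essentially the same route as the paper, which itself only sketches this theorem by saying the argument of Theorem \ref{CraigforG} transfers after replacing $\mathbf{G}$-inseparability with inseparability and working with complete Henkin theories in the full $\mathbf{G}^\Delta$ language. Your explicit treatment of the $\Delta$ induction step via $\thicksim$-completeness and the fact that a $\mathbf{Lin}$-homomorphism sends only $1_{\mathcal{B}}$ to $1$ and only $0_{\mathcal{B}}$ to $0$ is correct and in fact supplies a detail the paper leaves implicit.
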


\begin{proof}
One could follow the same path as in the proof of Theorem \ref{CraigforG} to define $T_{\omega}$ and $U_{\omega}$. Moreover, the same proof applies to show that theories  $T_{\omega}$ and  $U_{\omega}$  are inseparable and therefore, satisfiable. Further, $T_{\omega}$, $U_{\omega}$ and $T_{\omega}\cap U_{\omega}$ are complete  Henkin theories in $\mathcal{L}'_\varphi$, $\mathcal{L}'_\psi$ and $\mathcal{L}'_0$, respectively. Finally, definition of the valuation $v$ and its properties remain the same as the above.
\end{proof}
\begin{remark}\label{dedent}

It is known that in $\mathbf{G}$, the $1$-entailment relation coincides with the corresponding notions of deduction relation $\vdash$ and entailment relation $\models$, (\cite{baaz2019}, Defenition 13). Hence, the deductive and entailment versions of the Craig interpolation also hold in $\mathbf{G}$. On the other hand, in $\mathbf{G}^\Delta$, the $1$-entailment relation agrees with the notion of $\vdash_{H^{\Delta}}$ defined through hypersequent proof system,  \cite{baaz2006}. So, the deductive version of the Craig interpolation property is also valid in  $\mathbf{G}^\Delta$.

\end{remark}

\section{Conclusions and Further Works}\label{sec3}
 In this article, we showed that for each pair of closed $\mathbf{G}$-formulas $\varphi$ and $\psi$ whose languages consist of constant and relation symbols, if $\varphi\Vdash \psi$ then there is a closed $\mathbf{G}$-formula $\theta$ in the common language of $\varphi$ and $\psi$ such that $\varphi\Vdash \theta$ and $\theta\Vdash \psi$.  The first natural problem is whether the Craig interpolation property holds for languages which consist of function symbols. With the proof terminologies of Theorem \ref{CraigforG} in mind, notice that, in the classical setting, the languages contain the equality symbol. So, the Henkin method in the classical situation allows to "represent" each term in $\mathcal{L}_{\varphi}$ and $\mathcal{L}_{\psi}$ by a constant symbol in the set $C$. On the other hand, in the absence of equality (or similarity) relation terms in $\mathcal{L}_{\varphi}$ and $\mathcal{L}_{\psi}$ have interpretations independent of the set $C$ and therefore, the last step of the proof in defining the valuation $v$ with the desired properties fails. Another interesting topics to investigate are the Lyndon interpolation, Beth definability and Robinson joint consistency in both $\mathbf{G}$ and $\mathbf{G}^{\Delta}$. It also seems plausible that our present methods can be naturally extended to \g Logic with similarity relation, possibly for languages with function symbols. Finally, in $\mathbf{G}^{\Delta}$, the entailment relation $\models$  is different from the $1$-entailment and therefore, from $\vdash_{H^{\Delta}}$. Hence, the question which remains unanswered is whether the entailment version of the Craig interpolation is valid in $\mathbf{G}^{\Delta}$.

\end{document}